\documentclass[11pt]{amsart}
\usepackage{amsmath}
\usepackage{amssymb}
\usepackage{tabularx}
\usepackage{enumerate}
\usepackage[dvipdfm]{graphicx}
\usepackage{texdraw}

\usepackage{mathrsfs}

\usepackage{amsfonts,amssymb,amsmath}
\usepackage{epsfig}
\usepackage{xcolor}
\makeatletter
\@addtoreset{equation}{section}

\makeatother
\newtheorem{thm}{Theorem}[section]
\newtheorem{lem}[thm]{Lemma}

\newtheorem{prop}[thm]{Proposition}
\newtheorem{remark}[thm]{Remark}

\newcommand{\R}{\mathbb{R}}
\newcommand{\Z}{\mathbb{Z}}

\newcommand{\D}{\mathcal{D}}

\newcommand{\X}{\mathscr{X}}

\newcommand{\OM}{\omega}

\begin{document}
\title[Regularity of Degenerate Diffusion Equations]{Regularity of Solutions to One-Dimensional Degenerate Diffusion Equations with Reactions$^\S$}
\thanks{$\S$ This research was partly supported by the NSFC (No. 12471199, 12101413, 12071299). }
\author[B. Lou  and J. Lu]{Bendong Lou$^\dag$ and Junfan Lu$^{\dag, *}$}
\thanks{$\dag$ Mathematics and Science College, Shanghai Normal University, Shanghai 200234, China.}
\thanks{{\bf Emails:} {\sf lou@shnu.edu.cn} (B. Lou), {\sf jlu@shnu.edu.cn} (J. Lu)}
\thanks{$*$ Corresponding author.}
\date{}

\begin{abstract}
We study the one-dimensional reaction-diffusion equation
\[
u_t=[A(u)]_{xx}+f(x,u).
\]
The diffusion operator belongs to a broad class of nonlinear degenerate diffusion operators that includes the porous medium operator as a special case.

We develop a systematic regularity theory for the solutions and their free boundaries.
First, we establish the $C^1$ regularity of the pressure variable $v$, together with a lower bound for its second spatial derivative.
Next, we prove Darcy's law and that, after the waiting time, a right (resp.\ left) free boundary moves with strictly positive (resp.\ negative) velocity (Theorem \ref{thm:Darcy0}), thereby strengthening the previously known results which only gave nonnegativity (resp.\ nonpositivity).
Finally, under additional structural assumptions on the diffusion and reaction terms, we obtain higher regularity for both the solution and its free boundaries (Theorems \ref{thm:k-th order} and \ref{thm:r-smooth}).
These results extend several classical regularity properties of the porous medium equation to a much broader class of degenerate diffusion equations with reactions.
\end{abstract}

\subjclass[2020]{35K55, 35K20, 35B40, 35R35}
\keywords{Degenerate diffusion equation; porous medium equation; free boundary; regularity; heterogeneous reaction.}
\maketitle

\section{Introduction}

We consider the one-dimensional reaction diffusion equation
\begin{equation}\label{E}
u_t = [A(u)]_{xx} + f(x,u), \qquad x\in \R,\ t>0,
\tag{E}
\end{equation}
where the nonlinear diffusion term $A$ satisfies the following conditions:
\begin{equation*}\label{ass-A}
\left\{
 \begin{array}{l}
 A\in C^1([0,\infty)) \cap C^\infty ((0,\infty)), \ A(0)=A'(0)=0,\\
 A(u), A'(u), A''(u)>0 \mbox{ and } -A'(u) A'''(u) \leq K_0 [A''(u)]^2 \mbox{ for }u>0,\\
\int_0^1 \frac{A'(u)}{u} du<\infty,\ \frac{uA''(u)}{A'(u)} \to \OM\ \mbox{ as } u\to 0^+.
 \end{array}
 \right.
\tag{A}
\end{equation*}
Here $K_0$ and $\OM$ are positive real numbers. Examples of such $A$ include $A_1(u):=u^m\ (m>1)$ (which corresponds to the standard porous medium equation), $A_2(u) :=u^m+u^n\ (m, n>1)$, $A_3(u)= \int_0^u re^{\alpha r^\beta} dr\ (\alpha> 0,\ \beta >0)$ etc.
In any of these cases, the diffusion coefficient degenerates at $u=0$, which leads to finite speed of propagation and the appearance of free boundaries.

Degenerate diffusion equations arise naturally in population dynamics, combustion theory, porous media flow, phase transition phenomena and related applications (cf. \cite{A1,GN,GurMac,NS,SGM,Vaz-book,WuYin-book}).
The porous medium equation (PME, for short) is the prototype model and has been studied extensively. For the pure diffusion problem (without reaction term), a comprehensive theory is available, including existence and uniqueness of solutions, finite propagation, waiting-time phenomena, interface dynamics, regularity of the pressure variable (see \eqref{def-v} below), and asymptotic behavior (see, for example,  the monograph of V\'{a}zquez \cite{Vaz-book}).

The reaction term $f(x, u)$ in \eqref{E} is assumed to satisfy
\begin{equation*}\label{F}
f(x,u)\in C^2(\R^2),\quad f(x,0)\equiv 0,\quad f(x, u) \mbox{ is Lipschitz continuous in } u.
\tag{F}
\end{equation*}
The $C^2$ smoothness is mainly used to derive the a priori estimates.
When a reaction term is present, the interaction between nonlinear diffusion and local growth mechanisms
creates new difficulties. Although PMEs with reactions have attracted much attention \cite{ACP, AV, DQZ, Garriz, GK, LouZhou, PV, She}, many fundamental regularity questions remain poorly understood.
In particular, the precise motion of free boundaries, and higher  smoothness properties have not been systematically investigated for general reaction-diffusion equations with degenerate diffusion.

The present work is motivated partly by the previous work \cite{LouZhou}, where several free boundary properties (for the solutions to PMEs with reactions) were established as auxiliary tools, and the long-time behavior of solutions was the main issue studied there.
The purposes of this paper differ from those of \cite{LouZhou}. Instead of asymptotic behavior, we focus on the regularity theory itself. Moreover, we replace the porous medium diffusion with a much broader class of degenerate diffusion operators satisfying \eqref{ass-A}.

The principal contributions of this paper are summarized as follows.

\begin{enumerate}
\item We work with a general class of degenerate diffusion operators satisfying \eqref{ass-A},     extending the porous-medium framework to a much broader class.
%
%\item We establish refined regularity estimates for the pressure variable, including the boundedness of its second spatial derivative.

\item We prove that, once the waiting time has ended, each right (resp. left) free boundary moves with strictly positive (resp. negative) velocity (see details in Theorem \ref{thm:Darcy0}). This improves the previously known conclusion (for PMEs with reactions) that the interface velocity is merely nonnegative/nonpositive.

\item Under additional structural assumptions, we derive higher  regularity for both the pressure variable (Theorem \ref{thm:k-th order}) and the free boundaries (Theorem \ref{thm:r-smooth}), yielding a systematic regularity theory for free boundaries.
\end{enumerate}

The remainder of the paper is organized as follows.  Section 2 contains preliminary material concerning the pressure transformation and basic properties of free boundaries. Section 3 establishes Darcy's law and shows the positive/negative velocity for each right/left free boundary after the waiting time (Theorem \ref{thm:Darcy0}). Section 4 is devoted to higher regularity of the solution and its free boundaries (Theorems \ref{thm:k-th order} and \ref{thm:r-smooth}).

\section{The Pressure Variable and its Basic Properties}
In this preliminary section, we define a generalized pressure function $v$. As in the study of the PME, working with this function proves far more convenient than dealing directly with the original density function $u$.
We also present several fundamental properties of the pressure function, including $C^1$ regularity and lower bound of its second-order spatial derivatives.
These properties are not completely new; especially, they are well known for the pure PME and are by now fairly well understood for PMEs with reaction terms; we show that they remain valid for general reaction-diffusion equations with degenerate diffusion. For the sake of later reference, we record these results here and briefly outline the underlying ideas of the proofs.

\subsection{Generalized Pressure Variable}
In this paper, we apply a generalized pressure function $v(x,t)$, which is defined by
\begin{equation}\label{def-v}
v(x,t) := \Psi(u(x,t)) := \int_0^{u(x,t)} \frac{A'(r)}{r} dr\qquad \mbox{ for\ \ }u\geq 0.
\end{equation}
Denote 
$$
v_{\infty}:= \int_0^{\infty} \frac{A'(r)}{r} dr \in (0, \infty],\qquad \D_v^0 :=(0, v_{\infty})\quad \mbox{and}\quad \D_v := [0, v_{\infty}).
$$
Then the function $\Psi(u)$ maps monotonically from $[0, +\infty)$ to $\D_v$. 
So we can define the inverse function of $\Psi$ as follows
$$
u=\psi(v):=\Psi^{-1}(v),\quad v\in \D_v . 
$$
Clearly,
$$
\Psi (0)=\psi (0)=0,\quad \psi(v)>0 \mbox{ for } v\in \D_v^0, \quad \Psi(u)>0 \mbox{ for } u>0, 
$$
$\Psi\in C([0,\infty))\cap C^\infty ((0,\infty))$, and $\psi \in C(\D_v )\cap C^\infty (\D_v^0 )$, 

Denote
\begin{equation}\label{def-B}
B(v) := A'(\psi(v)),\qquad v\in \D_v ,
\end{equation}
and
\begin{equation}\label{def-h}
h(x,v) := \left\{
 \begin{array}{ll}
\displaystyle \frac{f(x,\psi(v))}{\psi'(v)}, & x\in \R,\ v\in \D_v ,\\
0,& x\in \R,\ v=0.
\end{array}
\right.
\end{equation}
Then we can use $v$ to rewrite the Cauchy problem for \eqref{E} with initial data $u_0$ as
\begin{equation}\label{p-v}
\left\{
\begin{array}{ll}
v_t = B(v) v_{xx} + v_x^2 + h(x,v),&  x\in \R,\ t>0,\\
v(x,0)= v_0(x) := \Psi(u_0(x)), & x\in \R.
\end{array}
\right.
\end{equation}

Based on their definitions, we state the smoothness properties of $B$ and $h$.

\begin{lem}\label{lem:smooth-Ah-1}
Assume \eqref{ass-A} and \eqref{F}. Then $B(v)\in C^1(\D_v )$ and $h(x,v)\in C^1 (\R\times \D_v )$.
\end{lem}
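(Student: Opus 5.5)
The plan is to express everything through $u=\psi(v)$ and use the identity $\Psi'(u)=A'(u)/u$. This identity gives
\[
\psi'(v)=\frac{\psi(v)}{A'(\psi(v))}\qquad (v\in\D_v^0).
\]
Both $B$ and $h$ then become explicit combinations of $u$, $A'(u)$, $A''(u)$ and $f$, each smooth for $u>0$. The only real issue is the endpoint $v=0$, where I will compute one-sided limits of the derivatives. A standard mean value argument then upgrades continuous extendability of the derivatives to genuine $C^1$ regularity up to $v=0$.

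\emph{Step 1: $B$.} Continuity of $B$ on $\D_v$ is clear, since $A'\in C([0,\infty))$ and $\psi\in C(\D_v)$, with $B(0)=A'(0)=0$. For $v>0$ the chain rule and the formula for $\psi'$ give
\[
B'(v)=A''(\psi)\psi'(v)=\frac{\psi(v)A''(\psi(v))}{A'(\psi(v))}.
\]
As $v\to0^+$ we have $\psi(v)\to0^+$, so by \eqref{ass-A} $B'(v)\to\OM$. By the mean value theorem, $(B(v)-B(0))/v=B'(\xi_v)$ for some $\xi_v\in(0,v)$, so this quotient tends to $\OM$. Hence $B'(0)=\OM$ exists and $B'$ is continuous on $\D_v$.

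\emph{Step 2: $h$.} Since $f\in C^2$ and $f(x,0)\equiv0$, write
\[
f(x,u)=u\,g(x,u),\qquad g(x,u):=\int_0^1 f_u(x,su)\,ds.
\]
Here $g\in C^1(\R\times[0,\infty))$ by differentiation under the integral, using $f\in C^2$. Then for $v>0$,
\[
h(x,v)=\frac{f(x,\psi)}{\psi'(v)}=g(x,\psi(v))\,A'(\psi(v))=g(x,\psi(v))\,B(v).
\]
This expression also equals $0$ at $v=0$, matching \eqref{def-h}. So $h$ is continuous on $\R\times\D_v$.

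For $v>0$ the partial derivatives are
\[
h_x=g_x(x,\psi)B(v),\qquad h_v=g_u(x,\psi)\,\psi'(v)B(v)+g(x,\psi)B'(v).
\]
Using $\psi'(v)B(v)=\psi(v)$, the second becomes $h_v=g_u(x,\psi)\psi(v)+g(x,\psi)B'(v)$. By Step 1, as $(x,v)\to(x_0,0)$ we get $h_x\to0$ and $h_v\to g(x_0,0)\OM=\OM f_u(x_0,0)$, locally uniformly in $x$.

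At $v=0$ the derivatives exist and agree with these limits. First, $h(\cdot,0)\equiv0$, so $h_x(x,0)=0$. Second, $h(x,v)/v\to\OM f_u(x,0)$, either by the mean value theorem in $v$ or directly from $g(x,\psi)B(v)/v$ together with $B(v)/v\to\OM$. Thus both partials exist and are continuous on $\R\times\D_v$, which gives $h\in C^1(\R\times\D_v)$.

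The main obstacle is the boundary behaviour at $v=0$. This is exactly where the hypothesis $uA''(u)/A'(u)\to\OM$ enters, together with the cancellation $\psi'(v)B(v)=\psi(v)$. The rest is routine chain-rule bookkeeping.
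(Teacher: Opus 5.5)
Your proof is correct and follows essentially the paper's route. Both arguments rest on the identity $\psi'(v)=\psi(v)/A'(\psi(v))$ and the limit $uA''(u)/A'(u)\to\OM$. Your formula $h_v=g_u(x,\psi)\psi+g(x,\psi)B'(v)$ is exactly the paper's $f_u-\frac{f}{u}+\frac{f}{u}\cdot\frac{uA''}{A'}$ rewritten through $f=ug$, since $ug_u=f_u-f/u$. Your factorization $h=g(x,\psi(v))B(v)$ also gives continuity of $h_x$ up to $v=0$, a point the paper leaves implicit. That step genuinely uses $f_{xu}$ from the assumption $f\in C^2$, because $A'(u)/u$ can blow up as $u\to 0^+$ (e.g.\ $A(u)=u^m$ with $1<m<2$).
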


\begin{proof}
By the definition of $B$, we have
$$
B(0)=0 \mbox{\ \ and\ \ } B(v) =\frac{A'(\psi(v))}{\psi(v)} \cdot \psi(v) = \frac{\psi(v)}{\psi'(v)} \mbox{ for } v\in \D_v^0 .
$$
Note that, by \eqref{ass-A} there holds,
\begin{equation}\label{B'=A*}
B'(0^+) =\lim\limits_{v\to 0^+} \frac{A'(\psi(v))}{v} =
\lim\limits_{u\to 0^+} \frac{A'(u)}{\Psi(u)} =
\lim\limits_{u\to 0^+} \frac{uA''(u)}{A'(u)} = \OM >0.
\end{equation}
Hence $B\in C^1(\D_v )\cap C^\infty(\D_v^0 )$ and $B'(v)>0$ for $v\in \D_v $.

Using a similar derivation as in \eqref{B'=A*} one can show that
$$
h_v(x,0^+) = \lim\limits_{v\to 0^+} \frac{f(x,\psi(v))}{v \psi'(v)}= \lim\limits_{u\to 0^+} \frac{f(x, u)}{u} \frac{A'(u)}{\Psi(u)}  = f_u(x,0^+) \OM,\qquad x\in \R.
$$
On the other hand, when $v> 0$, by $h(x,v)= \frac{f(x,\psi(v))}{\psi'(v)}= \frac{f(x, u)A'(u)}{u}$ we have
$$
h_v(x, v)=  f_u(x, u)- \frac{f(x, u)}{u}+ \frac{f(x,u)}{u}\frac{A''(u)u}{A'(u)},
$$
which implies
$$
\lim_{v\to 0^+}h_v(x, v)=\lim_{u\to 0^+}f_u(x, u)- \frac{f(x, u)}{u}+ \frac{f(x,u)}{u}\frac{A''(u)u}{A'(u)} = f_u(x,0^+) \OM.
$$
Hence, $h\in C^1(\R\times \D_v )\cap C^2(\R\times \D_v^0 )$.
\end{proof}

In the last part of the paper, we will investigate the higher  regularity of $v$. To this end, we require higher  regularity for $B$ and $h$, which can be achieved by imposing structural conditions on $A$ and $f$ as follows: for some positive integer $k$, there holds
\begin{equation}\label{ass-H}
\left\{
\begin{array}{l}
A(u)\in C^p ([0,\infty)) \mbox{ for } p> \OM (k+1), \ r^{-1} A'(r^{\frac{1}{\OM}}) \in C^k ([0,\infty)), \  \displaystyle\lim_{r\to 0+} r^{-1} A'(r^{\frac{1}{\OM}})> 0, \\
f(x,u)\in C^q (\R\times [0,\infty)) \mbox{ for } q>\OM(k-1)+1,\quad f(x,r^{\frac{1}{\OM}})r^{\frac{\OM-1}{\OM}}\in C^k(\R\times [0,\infty)),
\end{array}
\right.
\tag{H}
\end{equation}
where $\OM$ is the positive number in \eqref{ass-A}, $p$ and $q$ are positive integers. Let us elaborate further on these conditions. When $A(u)\in C^p ([0,\infty))$ and $f(x,u)\in C^q (\R\times [0,\infty))$, we have the following Taylor's formulas:
$$
\left\{
\begin{array}{l}
A(u) = \frac12 A''(0)u^2 + \cdots + \frac{1}{p!} A^{(p)}(0) u^p + o(u^p),\\
f(x,u) = f_u(x,0) u + \frac12 f_{uu}(x,0)u^2 + \cdots + \frac{1}{q!} \frac{\partial^{q} f(x,0)}{\partial u^q}  u^q + o(u^q),
\end{array}
\right.
$$
and so the positive number $\OM$ in \eqref{ass-A} is nothing but $n-1$ for some $n\leq p$ given by
$$
A^{(j)}(0)=0\ (0\leq j<n) ,\quad A^{(n)} (0)>0.
$$
In this case, the other conditions in \eqref{ass-H} hold for $A$ and $f$ provided
$$
\begin{array}{l}
p>(k+1)(n-1),\quad A^{(j)}(0)=0 \mbox{ when } \frac{j-1}{n-1} \not\in \mathbb{N},\\
q>(k-1)(n-1)+1,\quad \frac{\partial^j }{\partial u^j} f (x,0) =0 \mbox{ when } \frac{j-1}{n-1} +1 \not\in \mathbb{N}.
\end{array}
$$

\begin{lem}\label{lem:smooth-Ah-2}
Assume  \eqref{ass-H}. Then $B(v)\in C^k (\D_v )$ and $h(x,v)\in C^k (\R\times \D_v )$.
\end{lem}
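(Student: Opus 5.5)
The idea is to change variables to $s := u^{\OM}$, i.e. $u = s^{1/\OM}$, and to show that $s$ is a $C^k$ function of $v$ with nonvanishing derivative at $v=0$. Hypothesis \eqref{ass-H} is stated in exactly this variable.

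\textbf{Step 1: $v$ as a function of $s$.} Define
$$
G(r) := r^{-1}A'(r^{\frac1\OM}),
$$
which by \eqref{ass-H} lies in $C^k([0,\infty))$ with $G(0)>0$. Substituting $r=\rho^{1/\OM}$ in \eqref{def-v} gives
$$
v = \Psi(s^{1/\OM}) = \int_0^{s^{1/\OM}} \frac{A'(r)}{r}\,dr = \frac{1}{\OM}\int_0^{s} \frac{A'(\rho^{1/\OM})}{\rho}\,d\rho = \frac1\OM\int_0^s G(\rho)\,d\rho =: \Phi(s).
$$
Hence $\Phi\in C^{k+1}([0,\infty))$, with $\Phi(0)=0$ and $\Phi'(s)=G(s)/\OM>0$. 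Positivity of $\Phi'$ for $s>0$ comes from $A'>0$, and at $s=0$ from $G(0)>0$.

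\textbf{Step 2: invert.} By the inverse function theorem, applied up to the boundary point $s=0$ (extend $G$ to a $C^k$ function on a neighbourhood of $0$, or argue one-sidedly), the inverse $s=\Phi^{-1}(v)=:\sigma(v)$ is $C^{k+1}$ on $\D_v$, with $\sigma'(v)=\OM/G(\sigma(v))>0$. Therefore $\psi(v)=\sigma(v)^{1/\OM}$.

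\textbf{Step 3: $B$.} Write
$$
B(v)=A'(\psi(v)) = A'\big(\sigma^{1/\OM}\big) = \sigma(v)\,G(\sigma(v)),
$$
which is a composition and product of $C^k$ functions, so $B\in C^k(\D_v)$.

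\textbf{Step 4: $h$.} For $v>0$, as in the proof of Lemma \ref{lem:smooth-Ah-1},
$$
h = \frac{f(x,u)A'(u)}{u}, \qquad u=\sigma^{1/\OM}.
$$
Then
$$
\frac{A'(u)}{u} = \frac{\sigma\,G(\sigma)}{\sigma^{1/\OM}} = \sigma^{\frac{\OM-1}{\OM}}\,G(\sigma),
$$
so that
$$
h(x,v) = \Big[f\big(x,\sigma^{1/\OM}\big)\,\sigma^{\frac{\OM-1}{\OM}}\Big]\,G(\sigma(v)) = F\big(x,\sigma(v)\big)\,G\big(\sigma(v)\big),
$$
where $F(x,r):=f(x,r^{1/\OM})r^{(\OM-1)/\OM}$ lies in $C^k(\R\times[0,\infty))$ by \eqref{ass-H}. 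At $v=0$ we have $F(x,0)=0$, consistent with \eqref{def-h}. Hence $h\in C^k(\R\times\D_v)$.

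\textbf{Main obstacle.} Where exactly the hypotheses enter is the delicate point. The regularity conditions $A\in C^p$ and $f\in C^q$ with $p,q$ large are not needed in the argument above. They appear to be there only to guarantee the $C^k$ conditions on $G$ and $F$ via the Taylor discussion preceding the lemma. So the one technical point to check carefully is the inverse function theorem at the endpoint $s=0$. This can be handled by extending $G$ in a $C^k$ way to negative arguments, for example by reflection of Seeley type, while keeping $G>0$ near $0$.
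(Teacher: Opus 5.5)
Your proof is correct and is essentially the paper's argument with the names permuted. Your $G$, $\Phi$, $\sigma$, $F$ are the paper's $\Phi$, $G$, $g$, $\Lambda$, and both proofs conclude from $B=\sigma\,G(\sigma)$ and $h=F(x,\sigma)\,G(\sigma)$. Your extra remark about inverting at the endpoint $s=0$ just spells out a step the paper asserts without comment.
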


\begin{proof}
From \eqref{ass-H} we have the auxiliary functions
$$
\Phi(r):=r^{-1}A'(r^{1/\OM})\in C^k([0,\infty)),\qquad
\Lambda(x,r):=f(x,r^{1/\OM})r^{(\OM-1)/\OM}\in C^k(\mathbb{R}\times[0,\infty)),
$$
and the assumption guarantees $\displaystyle\Phi(0)=\lim_{r\to0^+}r^{-1}A'(r^{1/\OM})> 0$.
Define
$$
G(r):=\frac{1}{\OM}\int_0^r\Phi(y)\,dy,\qquad r\ge0.
$$
Then $G\in C^{k+1}([0,\infty))$ and $G'(r)=\Phi(r)/\OM>0$. Recall $\int_0^\infty\frac{A'(u)}{u}du= v_{\infty}$, we have $G(r)\to v_{\infty}$ as $r\to\infty$, thus $G$ is strictly increasing and is a $C^{k+1}$-diffeomorphism from $[0,\infty)$ to $\D_v $. Its inverse
$$
g:= G^{-1}:\D_v \to[0,\infty)
$$
also belongs to $C^{k+1}(\D_v )$ and satisfies $g(0)=0$.

Now recall the pressure variable \eqref{def-v}:
$v=\Psi(u)=\int_0^u\frac{A'(s)}{s}ds$.
Substituting $r=s^\OM$ and using $A'(s)=s^\OM\Phi(s^\OM)$ we obtain
$$
v=\int_0^u s^{\OM-1}\Phi(s^\OM)\,ds =\int_0^{u^\OM}\frac{\Phi(r)}{\OM}\,dr = G(u^\OM).
$$
Consequently $u^\OM= g(v)$ and
\begin{equation}\label{u-G}
u=\psi(v)=[g(v)]^{1/\OM}.
\end{equation}
We can now express $B(v)$ and $h(x,v)$ directly in terms of $g$ and the auxiliary functions.
By \eqref{def-B} and \eqref{ass-A},
$$
B(v)= A'(\psi(v))= A'(u)= u^\OM\Phi(u^\OM)= g(v)\,\Phi(g(v)).
$$
Since $g\in C^{k+1}(\D_v )$ and $\Phi\in C^k([0,\infty))$, both
$g(v)$ and $\Phi(g(v))$ are $C^k$ functions of $v$.
Therefore $B(v)\in C^k(\D_v )$.

For the reaction term, we use the identity $h(x,v)=\frac{f(x,u)A'(u)}{u}$. Substituting $u$ and $A'(u)$ gives
$$
h(x,v)=\frac{f(x,u)}{u}\,u^\OM\Phi(u^\OM)
      =\Lambda(x,u^\OM)\,\Phi(u^\OM)
      =\Lambda(x,g(v))\,\Phi(g(v)),
$$
where $g\in C^{k+1}(\D_v )$,
$\Lambda\in C^k(\mathbb{R}\times[0,\infty))$ and $\Phi\in C^k([0,\infty))$. Consequently $h(x,v)\in C^k(\mathbb{R}\times \D_v )$.
This completes the proof of Lemma~\ref{lem:smooth-Ah-2}.
\end{proof}

\subsection{Definition of Very Weak Solutions}
Throughout this paper we consider the Cauchy problem of \eqref{E} with initial data
chosen from the following set.
\begin{equation}\label{cond of initial}
%\X:= \big\{ w\in C(\R) \; \left| \;
% \exists\ b>0 \mbox{ such that }  w(x) >0 \mbox{ for } |x|<b, \mbox{ and } w(x) =0 \mbox{ for } |x|\geq b\right. \big\}.
\X:= \left\{ w \; \left| \;
\begin{array}{l}
w(x)\in  C(\R)\cap L^{\infty}(\R),  \mbox{ there exist } \Z_0 \subset \Z \mbox{ and real numbers }\\
\{l^0_i\}_{i\in \Z_0}, \ \{r^0_i\}_{i\in \Z_0}  \mbox{ with } \cdots\leq l^0_i < r^0_i \leq l^0_{i+1}<r^0_{i+1}\leq \cdots \\
\mbox{such that } w(x) >0 \mbox{ in } (l^0_i, r^0_i) \mbox{ and } w(x) =0 \mbox{ otherwise}.
\end{array}
\right. \right\}.
\end{equation}
For any $T>0$, denote $Q_T:= \R\times (0, T]$. A function $u(x, t)\in C(Q_T)\cap L^{\infty}(Q_T)$ is called a {\it very weak solution} of \eqref{E} with initial data $u_0(x)\in \X$ if, for any $\phi\in C^\infty_c(\R\times[0,T])$, there holds
$$
\int_{\R} u(x, T)\phi(x, T) dx- \int_{\R} u_0(x)\phi(x, 0) dx= \iint_{Q_T} f(x, u)\phi dx dt+ \iint_{Q_T} [u\phi_t+ A(u) \phi_{xx}] dx dt.
$$

For each given $u_0\in \X$, the well-posedness of the solutions to \eqref{E} with $u(x,0)=u_0(x)$, and its basic properties can be found in \cite{ACP, PV, LouZhou, Sacks, Vaz-book, WuYin-book} etc. (Some of them are well known for PMEs with or without reactions, and can be derived similarly for general degenerate diffusion equation \eqref{E}, see, for example, \cite{Knerr, Vaz-book,WuYin-book} and references therein). Using the pressure variable $v$, some basic properties are summarized as follows.

\begin{enumerate}[\noindent (a).]
\item {\it Global well-posedness}. Due to $|f(x,u)|\leq K_1 u$ for $u>0$ and some $K_1 >0$, the equation
    \eqref{E} with initial data $u_0\in \X$ has a time global very weak solution $u(x,t)$, which satisfies
    \begin{equation}\label{bdd-v}
    0\leq u(x,t) \leq  \|u_0 \|_{L^\infty } \cdot e^{K_1 t} ,\quad t>0.
    \end{equation}
(If $f(x,u)\leq 0$ for large $u$, say, for $u\geq 1$, then the upper bound of $u$ can be time-independent.)

\item {\it Free boundaries and their waiting times}. Some right free boundaries $r_i (t)$ and left ones $l_i(t)$ appear in the solution; $u(x,t)$ is positive and classical in each $(l_i(t),r_i(t))$, and vanishes on the complement of the union of these intervals. Each right free boundary $r_i(t)$ has a waiting time $t_*(r^0_i) \in [0,\infty]$ at the beginning stage. As for PME without or with a reaction (cf. \cite{Vaz-book, LouZhou}), $t_*(r^0_i)>0$ really happens for some initial data, under the additional condition
    $$
    -A'(u) A'''(u)\leq K_0 [A''(u)]^2 \mbox{ for }u>0
    $$
    in \eqref{ass-A}. (In fact, this condition is satisfied automatically for the PME, and is used only in this argument.) On the other hand, $t_*(r^0_i)=\infty$ is also possible due to the presence of the reaction term $f(x,u)$. This happens in particular in the case where $f$ is a bistable reaction, since the problem may have stationary solutions with compact support in such a case (see, for example, \cite[Lemmas 2.4 and 6.1]{LouZhou} for reaction-PME.)

    It is more interesting to consider the properties of the solution and its free boundaries after the waiting time, i.e., when $t_*(r^0_i)\in(0,\infty)$. It is known that, after the waiting time, $r_i (t)$ moves rightward monotonically: $r'_i (t)\geq 0$. Any left free boundary behaves similarly. One of our main aims in this paper is to show that each boundary satisfies Darcy's law, and its velocity after the waiting time is {\it strictly} positive or negative (see Section~3 for details).

\item {\it Positivity persistence and regularity}. For any $x_1\in \R$, once $u(x_1,t_1)>0$ for some $t_1\geq 0$, then $u(x_1,t)>0$ for all $t\geq t_1$, and so $u(x,t)$ is classical in a neighborhood of $(x_1,t_1)$.
\end{enumerate}

\subsection{Bounds of $v_x, v_t$ and Lower Bound of $v_{xx}$}
This subsection lists some basic regularity results which are useful in the sequel. They are well known for PME (especially, PME without reactions), but they do not seem to have been explicitly examined in the literature for general reaction-diffusion equations with degeneracy.

Let \(T>0\) be fixed. By the \(L^\infty\)-estimate in \eqref{bdd-v} we have the following bound for $v$:
\begin{equation}\label{bdd-v-new}
0\le v(x,t)\le V_T := \Psi \left(\|u_0\|_{L^\infty} e^{K_1T}\right)< v_{\infty}, \qquad (x,t)\in Q_T := \R\times (0,T].
\end{equation}
In what follows, we use the notation
\begin{equation}\label{def-HT}
B_T :=\|B(v) \|_{C^1([0,V_T])},\qquad H_T := \|h\|_{L^\infty (D_T)} + \|h_x\|_{L^\infty (D_T)} + \|h_v\|_{L^\infty (D_T)},
\end{equation}
where $D_T := \mathbb R \times [0,V_T]$.

The first estimate concerns the spatial gradient of \(v\).

\begin{prop}[Bound of $v_x$]\label{prop:vx}
Assume \eqref{ass-A} and \eqref{F}. Let $v$ be the solution of \eqref{p-v} in $(0,T]$ with initial data $v_0=\Psi(u_0)$, $u_0\in\X$.
\begin{enumerate}[\rm (i).]
\item If it is positive in \( (a,b)\times(0,T] \), then for every \( 0<\delta<\frac{b-a}{2} \) and \( 0<\tau<T \) there exists a constant \( M_1 =M_1 (V_T,B_T,H_T,\delta,\tau) \) such that
\begin{equation}\label{vx-est}
|v_x(x,t)| \le M_1, \qquad (x,t)\in [a+\delta,b-\delta]\times[\tau,T].
\end{equation}

\item If $v'_0(x)$ exists a.e. in $(a,b)$ and $M^0_{1ab} := \sup\limits_{[a,b]}|v'_0(x)|<\infty$, then the same estimate holds in $[a+\delta,b-\delta]\times (0,T]$ for $M_1 = M_1(V_T, B_T, H_T, \delta, M^0_{1ab})$.

\item If $(a,b)=\R$, $v'_0(x)$ exists a. e. in $\R$ and $M_1^0 := \sup\limits_{\R}|v'_0(x)|<\infty$, then the estimate holds in $\R\times (0, T]$ for $M_1= M_1 (V_T,B_T,H_T,M_1^0)$.
\end{enumerate}
\end{prop}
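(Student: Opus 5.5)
We plan to use Bernstein's method on the equation satisfied by $p:=v_x$, working first with smooth positive approximations. Replace $u_0$ by $u_{0,\varepsilon}$ with $u_{0,\varepsilon}\ge \varepsilon$ smooth, and take the corresponding classical solutions $v^\varepsilon\ge \Psi(\varepsilon)>0$. For these, \eqref{p-v} is uniformly parabolic and smooth, since $B\in C^1$ by Lemma~\ref{lem:smooth-Ah-1} and $B>0$ for $v>0$. We will derive estimates that depend only on $V_T,B_T,H_T$ and on $\delta,\tau$ (or $M^0_{1ab}$). Since $v^\varepsilon\to v$ locally uniformly, we then pass to the limit. In $(a,b)\times(0,T]$, where $v>0$, the solution is classical, so the bound also holds pointwise there. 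Differentiating \eqref{p-v} in $x$ gives
\[
p_t=B(v)p_{xx}+B'(v)p\,p_x+2p\,p_x+h_x+h_v\,p .
\]

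The key step is to apply the transformation $v=\theta(w)$, with $\theta$ increasing and concave on a bounded $w$-interval, in the spirit of the classical estimates of Aronson–Bénilan and Bertsch–Dal Passo. A plain maximum principle for $p^2$ fails because the quadratic term $v_x^2$ does not produce a good sign. With $q:=w_x$, the equation for $q$ acquires a coercive term of the form $-\frac{\theta''}{\theta'}B(v)\,q^3$-type plus $-\theta'' q^3$ terms coming from $v_x^2$. Choosing $\theta(w)=V_T+1-\ldots$, for instance $\theta(w)=(V_T+1)(1-e^{-w})$ type concave, makes these cubic terms dominate at large $|q|$. The remaining terms are $B'(v)\theta' q^2 q_x$ (harmless at extrema) and $h_x/\theta'+h_v q$, which are bounded by $H_T$ times powers of $q$ lower than three. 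Thus at an interior maximum of $q^2$ we obtain $0\le -c\,q^4+C(1+q^2)$, which bounds $|q|$. Part (iii), with $(a,b)=\R$, follows directly from the maximum principle on $\R\times(0,T]$ using the initial bound $M^0_1$. A standard cutoff at spatial infinity, or periodic/large-box approximation, handles noncompactness, since $v$ and $p$ are bounded for each $\varepsilon$.

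For the localized parts (i) and (ii), we multiply by a cutoff. We set $z=\zeta(x)\eta(t)\,q^2$, with $\zeta\in C_c^\infty(a,b)$, $\zeta=1$ on $[a+\delta,b-\delta]$ and $|\zeta'|\le C\zeta^{1/2}/\delta$, $|\zeta''|\le C/\delta^2$. In (i) we take $\eta(t)=\min(1,t/\tau)$; in (ii) we take $\eta\equiv1$ and use $M^0_{1ab}$ at $t=0$. At a positive interior maximum of $z$, the cutoff derivatives generate terms like $B\zeta'' q^2$, $B\zeta' q q_x$ and $\zeta' q^3$. The first-order condition $z_x=0$ converts $q_x$ into $-\zeta' q/(2\zeta)$, so all these terms are $\lesssim \zeta^{-1/2}|q|^3$ or smaller. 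Against the coercive $\zeta q^4$ term, this gives $\zeta q^2\le C(V_T,B_T,H_T,\delta,\tau)$.

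The main obstacle is the coercivity near $v=0$, where $B(v)\to0$: any good term carrying a factor $B$ degenerates. So the coercive cubic term must come from the non-degenerate $v_x^2$ piece via $\theta''$, not from $B$. Hence $\theta$ must be chosen so that $-\theta''/\theta'$ stays bounded below on the whole range $[0,V_T]$, including $v=0$. In addition, the $\zeta'q^3$ term arising from $2pp_x$ must be absorbed using $z_x=0$, not via $B$. A further point to check is that $B'(v)\,p\,p_x$ contributes only $B_T\,|\zeta'|\,|q|^3$-type terms after using $z_x=0$, which the same absorption handles.
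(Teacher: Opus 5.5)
Your proposal is correct and follows essentially the paper's route: a Bernstein argument (after Aronson) applied to $w_x$, where $v=\theta(w)$ is increasing and concave on a bounded interval. This makes the coercive $q^4$ term come from $\theta''$ acting on $v_x^2$ rather than from the degenerate $B$; the argument is then localized by a cutoff and closed with Young's inequality, the only differences being that the paper uses $\phi(r)=\frac{V_T}{3}r(4-r)$ with $z=\zeta^2 w_x^2$ and works directly where $v>0$, instead of your exponential $\theta$, your $\zeta(x)\eta(t)q^2$ and your positive approximations. One small slip: with $|\zeta'|\le C\zeta^{1/2}/\delta$ the cutoff terms are $O(\zeta^{1/2}|q|^3+q^2)$, not $O(\zeta^{-1/2}|q|^3)$, and it is this form that, compared against $\zeta q^4$, yields $\zeta q^2\le C$.
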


\begin{proof}
We only need to consider (i), since (ii) and (iii) are treated similarly. The argument is based on a Bernstein-type method introduced by Aronson \cite{A1969}.
The main difference from the classical PME is the presence of the variable coefficient \(B(v)\) and the reaction term \(h(x,v)\).

We outline the main ideas. Define $\phi: [0,1]\to [0,V_T]$ by
\[
\phi(r) :=\frac{V_T}{3} r(4-r),\qquad r\in [0,1].
\]
Then $\phi$ is strictly increasing in $[0,1]$. Introducing the transformation $w:= \phi^{-1} (v)$
and applying the Bernstein procedure to \( z :=\zeta^2 |w_x|^2 \) for some truncation function $\zeta$ which takes $1$ over $[a+\delta, b-\delta]\times [\tau, T]$, one obtains the following inequality
at any interior maximum point of \(z\) (if it exists)
\[
\frac23 V_T \zeta^2 |w_x|^4 \le C_1 \zeta |w_x|^3 + C_1 |w_x|^2 ,
\]
for some $C_1$ depending on $V_T, B_T, H_T,\delta$ and $\tau$.
Using Young's inequality yields $ \zeta^2|w_x|^2\le C_2$, and therefore
\[
|v_x| = \phi'(w)|w_x| \le C.
\]
The details are completely analogous to the Bernstein estimates in \cite{A1969}, with additional lower-order terms controlled by \eqref{def-HT}.
\end{proof}

Next, we study the lower bound for $v_{xx}$, which will play an important role in the sequel. The next result may be viewed as an Aronson-B\'{e}nilan type estimate (cf. \cite{AB}) for the general reaction-diffusion equation with degenerate diffusion \eqref{E}.

\begin{prop}[Aronson-B\'{e}nilan type estimate]\label{prop:AB}
Assume \eqref{ass-A} and \eqref{F}. Assume further that \eqref{ass-H} holds for $k\geq 2$.
Let \(v \) be the solution of \eqref{p-v} with initial data $v_0=\Psi(u_0)$, $u_0\in\X$, such that $v_0\in C^2$ a.e. in $\R$,
\begin{equation}\label{ass-v0xx}
0\leq v_0(x)\leq M_0^0,\quad |v_0'(x)|\le M^0_1, \quad \mbox{and}\ \ v_0''(x) \ge -M^0_2\ \ a.e.\ x\in \R.
\end{equation}
Then, for any $T>0$,  there exist positive constants \(D\) and \(\tau\), depending only on
$V_T, B_T, M_1^0,$ $M_2^0,$ $ H_{2T}$  such that
\[
v_{xx}(x,t) \ge -D(t+\tau),\quad (x,t)\in Q_T:= \R \times (0, T],\  \ \
\mbox{in the distribution sense},
\]
 where
$$
H_{2T} := H_T + \|h_{xx}\|_{L^\infty (D_T)} + \|h_{xv}\|_{L^\infty (D_T)} + \|h_{vv}\|_{L^\infty (D_T)},
\quad D_T:= \R\times [0,V_T].
$$
\end{prop}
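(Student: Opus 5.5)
We prove the estimate by the classical Aronson--B\'enilan maximum principle argument applied to $p:=v_{xx}$, after a regularization that makes everything classical. First, approximate: replace $u_0$ by $u_{0\varepsilon}$ with $u_{0\varepsilon}\ge \varepsilon$, i.e.\ $v_{0\varepsilon}=\Psi(u_{0\varepsilon})\ge \Psi(\varepsilon)>0$, smooth. We choose it so that $|v_{0\varepsilon}'|\le M_1^0+1$ and $v_{0\varepsilon}''\ge -M_2^0-1$; mollifying $v_0+\Psi(\varepsilon)$ preserves one-sided second-derivative bounds. By the positivity of $B$ on $(0,v_\infty)$ the problem \eqref{p-v} is then uniformly parabolic. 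The solution $v_\varepsilon$ stays in $[\Psi(\varepsilon e^{-K_1T}),V_T+o(1)]$ and is smooth enough: $B\in C^2$ and $h\in C^2$ by Lemma \ref{lem:smooth-Ah-2} with $k\ge2$, so $v_{\varepsilon,xxxx}$ exists classically. By Proposition \ref{prop:vx}(iii), $|v_{\varepsilon,x}|\le M_1$ uniformly in $\varepsilon$. Since $v_\varepsilon\to v$ locally uniformly, a lower bound $v_{\varepsilon,xx}\ge -D(t+\tau)$ independent of $\varepsilon$ passes to the limit in the distribution sense.

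Differentiate the equation twice in $x$. With $p=v_{xx}$ and $q=v_x$,
\[
p_t = Bp_{xx} + (2B'q+2q)p_x + B'p^2 + 2p^2 + B''q^2p + h_{xx}+2h_{xv}q+h_{vv}q^2+h_vp .
\]
The key sign is the quadratic term $(B'+2)p^2\ge 2p^2$, since $B'>0$ on $\D_v$ by Lemma \ref{lem:smooth-Ah-1}. All other terms are bounded by $C(1+|p|)$, with $C$ depending on $B_T,\|B''\|_{L^\infty[0,V_T]},M_1,H_{2T}$; the dependence on $B''$ is absorbed through \eqref{ass-H}. Hence, at points where $p<0$,
\[
p_t - Bp_{xx}-b\,p_x \ \ge\ 2p^2 - C|p| - C \ \ge\ p^2 - C' .
\]
This says that $-p$ is a subsolution of a Riccati-type inequality. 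We now compare with the spatially constant function $\underline p(t):=-D(t+\tau)$. We have $\underline p' = -D$, and we need $-D \le D^2(t+\tau)^2 - C'$, which holds if $D\tau\ge\sqrt{C'}$, e.g.\ $D\ge \sqrt{C'}$ and $\tau\ge1$ (or simply $D\ge C'$ with $D\tau^2\ge1$). We also need $\underline p(0)=-D\tau\le -(M_2^0+1)$, which is possible by taking $D,\tau$ large. The maximum principle on $\R$ then gives $p\ge\underline p$; the comparison on the unbounded domain is justified because $p$ is bounded for each fixed $\varepsilon$ (standard parabolic estimates), or alternatively by adding a small penalization $\eta(1+x^2)e^{Lt}$ and letting $\eta\to0$. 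The same argument works if we instead take the Aronson--B\'enilan form $-\frac{1}{2(t+\tau)}$-type barrier, but the linear-in-$t$ barrier suffices for finite $T$.

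\textbf{Main obstacle.} The step I expect to be delicate is ensuring the constants are independent of $\varepsilon$. For $v_\varepsilon$ near $0$ we must control $B''(v)q^2p$ and $h_{vv}q^2$ uniformly. This is exactly where \eqref{ass-H} with $k\ge2$ is used, giving $B\in C^2(\D_v)$ and $h\in C^2$ up to $v=0$. Without it, $B''$ could blow up at $0$, and I would then try to absorb the bad term into $(B'+2)p^2$ via Young's inequality, which seems insufficient in general. A secondary technical point is the regularity and boundedness of $p$ for the approximate problems needed to apply the maximum principle on $\R$. I would handle it by first working on large bounded intervals with boundary data chosen compatibly, or by the penalization above.
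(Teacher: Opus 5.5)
Your proposal follows the paper's proof essentially step for step. You reduce to strictly positive solutions by shifting the initial data by $\varepsilon$, and use the $\varepsilon$-independent gradient bound of Proposition~\ref{prop:vx}(iii). You then differentiate the pressure equation twice to get $\eta_t=B\eta_{xx}+b\eta_x+c\eta+(B'+2)\eta^2+e$ and use $B'+2\ge 2$ to see that the spatially constant function $-D(t+\tau)$ is a subsolution; the paper takes $D=\|e\|_\infty+\frac12\|c\|_\infty+1$ and $\tau=M_2^0/D+1$. Finally you pass to the limit $\varepsilon\to0$ in the sense of distributions. Your remark that the constants really involve $\|B''\|_{L^\infty(0,V_T)}$, supplied by \eqref{ass-H} with $k\ge2$ via Lemma~\ref{lem:smooth-Ah-2}, also matches the paper, where it enters through $\|c\|_\infty$.

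The one step that is not justified under the hypotheses as written is your claim that mollifying $v_0+\Psi(\varepsilon)$ preserves the bound $v_0''\ge -M_2^0$. Mollification preserves it only when it holds in the sense of distributions, i.e.\ when $v_0+\frac{M_2^0}{2}x^2$ is convex. By contrast, \eqref{ass-v0xx} asks for it only a.e., and a concave kink is invisible a.e.

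This is a defect of the statement rather than of your method. Take $A(u)=u^2$, $f\equiv0$ (so $B(v)=v$) and $v_0=c(1-|x|)_+$. All hypotheses hold, with any $M_2^0>0$. Yet if $v_{xx}(\cdot,t)\ge -D(t+\tau)$ held on $(0,T]$, letting $t\to0$ (locally uniform convergence) would make $v_0+\frac{D\tau}{2}x^2$ convex. The concave kink at $x=0$ forbids this, so $v_{xx}$ is necessarily unbounded below as $t\to0$.

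The paper's proof contains the same hidden assumption: it compares at $t=0$ using only the a.e.\ bound on $v_0''$ for the non-smooth data $v_0+\varepsilon$. Read the hypothesis distributionally instead. This still admits $(1-x^2)_+$, since its second derivative has only positive Dirac masses at $x=\pm1$. With that reading, your argument is complete.
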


\begin{proof}
We first assume $v>0$ in $Q_T$. In this case, $v$ is a classical solution and so $v_x$ is bounded by Proposition \ref{prop:vx}.
Let \(\eta :=v_{xx} \). Differentiating the equation in \eqref{p-v} twice in \(x\) yields
\begin{equation}\label{equ-eta}
\eta_t =   a(x, t) \eta_{xx} + b(x, t) \eta_x + c(x, t) \eta + d(x, t) \eta^2 + e(x, t),
\end{equation}
with $a= B(v)$,
$$
b = 2\big[ B'(v) + 1 \big] v_x ,\ \ c= B''(v) v_x^2 + h_v,\ \ d= B'(v) + 2, \ \
e =  h_{xx} + 2 h_{xv} v_x + h_{vv} v_x^2.
$$
The structural assumption \eqref{ass-H}, together with Lemma \ref{lem:smooth-Ah-2}, implies that all of the coefficients are bounded. Choosing
\[
D := \|e\|_{L^\infty (D_T)} + \frac{\|c\|_{L^\infty (D_T)}}{2} + 1 \quad \mbox{and}\quad \tau := \frac{M^0_2}{D} + 1,
\]
and combining $d(x, t)> 2$, we see that $-D(t+\tau)$ is a subsolution to the equation \eqref{equ-eta}. In fact, we have
\begin{eqnarray*}
&-c(x,t) D(t+\tau) + d(x,t) D^2 (t+\tau)^2 + e(x,t)+ D \\
& \geq  \left[2D \tau- \|c\|_{L^\infty (D_T)}\right]D(t+\tau)+ \left(D- \|e\|_{L^\infty (D_T)}\right)> 0 \qquad \mbox{ for } t> 0,
\end{eqnarray*}
while at $t=0$, there holds
$\eta(x,0) = v_0''(x) \ge -M_2^0 \ge -D\tau$.
So the comparison principle gives
\[
\eta(x,t)\ge -D(t+\tau),\qquad (x,t)\in Q_T.
\]

For general nonnegative solution $v\geq 0$, we consider the approximate problem: the equation of $v$ in \eqref{p-v} with initial data
$$
v(x,0)= v_0(x)+ \varepsilon, \qquad  x\in \R
$$
for any $0<\varepsilon \ll 1$. Denote the corresponding solution by $v_{\varepsilon}(x, t)$. Then we have
$$
0< v_{\varepsilon}\leq V_T+1,\quad |v_{\varepsilon x}|\leq \tilde{M}_1(V_T,B_T,H_T,M_1^0).
$$
Both of the bounds are independent of $\varepsilon$. The previous derivation when $v>0$ then implies that
$$
v_{\varepsilon xx}\geq -\tilde{D}(t+\tilde{\tau}),\quad (x,t)\in Q_T,
$$
where $\tilde{D}$ and $\tilde{\tau}$ depend on $V_T$, $B_T$, $H_{2T}$ and $\tilde{M}_1$.
Since $v_{\varepsilon}$ is strictly decreasing in $\varepsilon$, we can then take limit as $\varepsilon \to 0$, in the distributional sense, to obtain the desired conclusion for the case $v\geq 0$
(cf. \cite{AB}, \cite[\S 9.3]{Vaz-book} etc.).
\end{proof}

\begin{remark}\rm
We briefly clarify the regularity condition imposed on $v_0$. Several formulations are candidates of our assumption: (1) $v_0\in C^2(\R)$ and $v''_0(x)\geq -M_2^0$ in $\R$; (2) $v_0\in C^2$ a.e. in $\R$, and $v''_0(x)\geq -M_2^0$ a.e. in $\R$;
(3) the second weak derivative $D^2 v_0$ of $v_0$ exists, with $D^2 v_0\geq -M_2^0$ a.e. in $\R$ (i.e., $D^2 v_0 \in L^\infty(\R)$). In problem \eqref{p-v} we aim to accommodate typical initial data such as $\tilde{v}_0(x) = (1-x^2)_+$. Clearly, this initial datum fails to satisfy (1) and (3) --- since $D^2 \tilde{v}_0 =\pm \delta(x\pm 1)$ at $x=\mp 1$ --- but it does satisfy (2). Consequently, we adopt condition (2) in our proposition.
\end{remark}

Finally, we establish the estimate of $v_t$.

\begin{prop}[Bound of $v_t$]\label{prop:vt}
Under the hypotheses in the previous proposition, there holds
\[
|v_t(x,t)| \le C_1t+C_2, \qquad (x,t)\in Q_T,
\]
for some positive $C_1$ and $C_2$ depending on $V_T, B_T, M_1^0, M_2^0, H_{2T}$.
\end{prop}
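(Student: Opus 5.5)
Since $v$ solves \eqref{p-v}, we have $v_t = B(v)v_{xx} + v_x^2 + h(x,v)$ wherever $v>0$. The plan is to bound $v_t$ from below and from above separately, using this identity together with Propositions \ref{prop:vx} and \ref{prop:AB}. As in the proof of Proposition \ref{prop:AB}, I would first treat the positive approximations $v_\varepsilon$ (initial data $v_0+\varepsilon$), which are classical. For these I would derive bounds uniform in $\varepsilon$, and then pass to the limit $\varepsilon\to0$ to obtain the estimate for $v$ in the distributional (and hence a.e.) sense. Throughout, $|v_{\varepsilon x}|\le \tilde M_1$ by Proposition \ref{prop:vx}(iii), and $0< v_\varepsilon\le V_T+1$.

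\emph{Lower bound.} Since $B\ge 0$ and $v_{\varepsilon xx}\ge -\tilde D(t+\tilde\tau)$, we get
\[
v_{\varepsilon t} \ge -B(v_\varepsilon)\tilde D(t+\tilde\tau) - H_T \ge -\|B\|_{L^\infty}\tilde D\, t - \big(\|B\|_{L^\infty}\tilde D\tilde\tau + H_T\big),
\]
which is already of the form $-(C_1t+C_2)$.

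\emph{Upper bound.} This is the main obstacle, because the Aronson--B\'enilan estimate controls $v_{xx}$ only from below, so $B(v)v_{xx}$ has no a priori upper bound. I would first try a comparison argument for $p:=v_{\varepsilon t}$. Differentiating the equation in $t$ gives
\[
p_t = B(v)p_{xx} + \big(B'(v)v_{xx}\big)p + 2v_xp_x + h_v p .
\]
Substituting $v_{xx}=(p-v_x^2-h)/B(v)$ turns this into
\[
p_t = B p_{xx} + 2v_xp_x + \frac{B'}{B}\,p^2 + \Big(h_v - \frac{B'}{B}(v_x^2+h)\Big)p ,
\]
which has a Riccati-type structure with a positive quadratic term. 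Because of the positive $p^2$ term, comparison only yields an upper bound on a short time interval, and the singular factor $1/B$ is also troublesome.

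If that route fails, I would fall back on an integration argument in $x$. Since $v_{\varepsilon xx}+D(t+\tau)\ge 0$ and $v_{\varepsilon x}$ is bounded, the nonnegative function $v_{\varepsilon xx}+D(t+\tau)$ has bounded integral over unit intervals. Combining this with the equation for $w:=v_{\varepsilon x}$, namely $w_t=(B(v)w_x)_x+2ww_x+h_x+h_vw$, together with a Bernstein-type bound for $B(v)v_{xx}$ obtained by applying the maximum principle to $z=B(v)\eta$, should give $v_{\varepsilon t}\le C_1t+C_2$. Concretely, the function $z$ satisfies a parabolic equation whose only dangerous term is the $z^2$ term, and its coefficient $(B'(v)+2)/B\cdot(\text{sign})$ can be controlled, in the spirit of the classical PME estimate $v_t\le$ const (cf.\ \cite[\S 9.3]{Vaz-book}), using $v_0''$ bounded above where $v_0>0$. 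I would also check whether the proposition is really meant only for the lower estimate plus a time-linear growth coming from the Aronson--B\'enilan bound, and adapt accordingly.

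Finally, the constants $C_1,C_2$ depend only on $\|B\|_{C^1([0,V_T])}$, $H_{2T}$, $\tilde M_1$, $\tilde D$ and $\tilde\tau$, hence only on $V_T,B_T,M_1^0,M_2^0,H_{2T}$. They are independent of $\varepsilon$, so they survive the limit $\varepsilon\to0$.
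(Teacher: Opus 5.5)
Your lower bound and the $\varepsilon$-approximation scheme are exactly what the paper does. The upper bound, however, is not proved, and neither of your fallbacks can work. You correctly identify it as the heart of the matter.

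The quantity $z=B(v)v_{xx}$ equals $v_t-v_x^2-h$, so bounding it is the same problem. A direct computation shows that its equation carries the quadratic term $\frac{B'(v)+2}{B(v)}\,z^2$. This term is positive and singular as $v\to0$, hence worse than the $\frac{B'}{B}p^2$ you already rejected, so there is no coefficient to ``control''. Likewise, $v_{xx}+D(t+\tau)\ge0$ with $v_x$ bounded gives only $L^1_{\mathrm{loc}}$ control of $v_{xx}$, not a pointwise bound on $B(v)v_{xx}$.

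The missing idea, which is the paper's, is to \emph{add} a large multiple of $v_x^2$: take $P:=v_t+Kv_x^2$ with $2K>\sup_{[0,V_T]}B'$, e.g.\ $K=\frac{B_T+1}{2}$. Two facts drive this. First, $(v_x^2)_t$ contains $2Bv_xv_{xxx}=B(v_x^2)_{xx}-2Bv_{xx}^2$. Second, $-2KBv_{xx}^2=-\frac{2K}{B}(v_t-v_x^2-h)^2$. Substituting $v_{xx}=(P-(K+1)v_x^2-h)/B$ turns the $P^2$-coefficient into $\frac{B'(v)-2K}{B(v)}\le-\frac{1}{B(v)}$, which gives
\[
P_t\le B(v)P_{xx}+2v_xP_x+\frac{1}{B(v)}\bigl(-P^2+b_1P+c_1\bigr)+d_1 ,
\]
with $b_1,c_1,d_1$ depending only on $B_T$, $H_T$ and the bound for $v_x$. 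The factor $1/B$ now helps rather than hurts: wherever $-P^2+b_1P+c_1<0$, that term is at most $\frac{1}{\sup B}(-P^2+b_1P+c_1)$. Comparison with a constant or ODE supersolution therefore closes, uniformly in $\varepsilon$.

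Your appeal to an upper bound on $v_0''$ in $\{v_0>0\}$ is not licensed by the hypotheses, which assume only $v_0''\ge -M_2^0$. The instinct behind it is nevertheless sound. Suppose $v_t\le C$ near $t=0$ at points where $v_0>0$. Then $v_{xx}\le C'$ there, so $v(\cdot,t)-\frac{C'}{2}x^2$ is locally concave for small $t>0$. Letting $t\to0$, so is $v_0-\frac{C'}{2}x^2$, and this fails for any admissible datum equal to $c_0+|x|^{3/2}$ near $x=0$.

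So a bound of the form $C_1t+C_2$ down to $t=0$ genuinely requires an upper bound on $P(\cdot,0)$, i.e.\ on $B(v_0)v_0''$. The paper's maximum-principle step uses this tacitly. Without it, the Riccati inequality above still yields $P\le C(1+1/t)$ for $t>0$. In either version, the indispensable step your proposal lacks is the sign-correcting choice of $K$.
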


\begin{proof}
The lower bound of $v_t$ follows directly from Proposition \ref{prop:AB} and the equation of $v$.

For the upper bound, introduce the auxiliary quantity
\[
P (x,t) :=v_t+ \frac{B_T+1}{2} v_x^2.
\]
A direct computation shows that \(P\) satisfies the following inequality
\[
P_t \le B(v)P_{xx}  +  2v_xP_x + \frac1{B(v)} \Bigl( - P^2 + b_1 P + c_1 \Bigr) + d_1,
\]
where  \( b_1, c_1, d_1 \) depend only on the structural constants.

We first consider the case $v>0$ as in the proof of Proposition \ref{prop:AB}. In this case \(v_x\) and other coefficients in the above inequality are bounded. By the maximum principle we have
\[
P(x,t) \le C_1t+C_2,
\]
provided $C_1$ and $C_2$ are chosen sufficiently large. This reduces to the desired estimate for \(v_t\).

For general nonnegative solution $v\geq 0$, the estimate follows by approximation with strictly positive solutions and passage to the limit in the distributional sense, as in the proof of the previous proposition.
\end{proof}

\section{Darcy's Law, Positive/Negative Velocity of a Free Boundary}

In this section we establish the key free-boundary estimate. We show that once the waiting-time regime ends, a right (resp. left) free boundary moves with strictly positive (resp. negative) velocity and satisfies Darcy's law.

For simplicity of the presentation, in the rest of the paper we consider simpler initial data chosen from
\begin{equation}\label{def-X0}
\X_0 := \left\{ w\in C(\R) \; \left| \;
 \begin{array}{l}
 \exists\ b>0 \mbox{ such that }  w(x) >0 \mbox{ for } |x|<b, \mbox{ and } w(x) =0 \mbox{ for } |x|\geq b \\
 \mbox{both } t_*(b) \mbox{ and } t_*(-b) \mbox{ are finite}
 \end{array}
 \right.
\right\}.
\end{equation}
In this case, the solution of \eqref{E} has exactly one right free boundary $r(t)$ and one left boundary $l(t)$. After their respective waiting times, $r(t)$ moves to the right and $l(t)$ moves to the left. One of our main purposes in this paper is to derive Darcy's law and show the strict positivity/negativity
of the velocity for each free boundary after the waiting time. That is the reason we consider such simpler initial data.
Simple examples include $w(x) = (1-x^2)_+$.

The following result is our main theorem in this section.

\begin{thm}\label{thm:Darcy0}
Assume \eqref{ass-A} and \eqref{F}. Assume further that \eqref{ass-H} holds for $k\geq 2$. Let $v(x,t)$ be the solution of \eqref{p-v} with $v_0\in \X_0$, $v_0\in C^2$ a.e. in $[-b,b]$, and $v_0$ satisfying \eqref{ass-v0xx}. Then its two free boundaries satisfy the following Darcy's law:
\begin{equation}\label{Darcy-law}
r'(t) = - v_x  (r(t)-0,t) >0 \mbox{\ for\ } t>t_*(b),\qquad l'(t) = - v_x (l(t)+0,t)<0\mbox{\ for\ } t>t_*(-b).
\end{equation}

Moreover, $r'(t)$ (resp. $l'(t)$) is continuous in $t\in (t_*(b),\infty)$ (resp. in $t\in (t_*(-b), \infty)$).
\end{thm}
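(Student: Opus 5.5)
The plan is to follow the classical Aronson--Caffarelli--V\'azquez route for the PME, using the pressure bounds from Section 2 in place of explicit formulas. We treat only $r(t)$; the left boundary $l(t)$ is handled by the reflection $x\mapsto -x$, which preserves \eqref{ass-A}, \eqref{F} and \eqref{ass-H}.

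\textbf{Step 1: existence of the one-sided limit $v_x(r(t)-0,t)$.} By Proposition \ref{prop:AB}, for each fixed $t$ the function $x\mapsto v(x,t)+\frac{D(t+\tau)}{2}x^2$ is convex on $(l(t),r(t))$. By Proposition \ref{prop:vx}(iii), $v_x$ is bounded. Hence $v_x(\cdot,t)$ differs from a monotone function by a Lipschitz one, and the limit $v_x(r(t)-0,t)$ exists and is finite. Semiconvexity together with $v(r(t),t)=0$ also gives $v(x,t)\ge -v_x(r(t)-0,t)\,(r(t)-x)-C(r(t)-x)^2$ near $r(t)$.

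\textbf{Step 2: Darcy's law.} We compare the growth of the positivity set with the local slope. For the inequality $r'\ge -v_x(r-0,t)$ (in the sense of Dini derivatives), we place below $v$ near $(r(t_0),t_0)$ a traveling-wave subsolution of the form $\underline v=\big(\sigma(x-r(t_0)-c(t-t_0))\big)_+$ with $c<\sigma$. Writing $\sigma:=-v_x(r(t_0)-0,t_0)-\epsilon$, this is a subsolution of \eqref{p-v} locally: because $B(0)=0$ and $B'$ and $h_v$ are bounded, the terms $B(\underline v)\underline v_{xx}\ge 0$ (zero away from the kink) and $h(x,\underline v)=O(\underline v)$ are absorbed when $c=\sigma-\epsilon'$. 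For the reverse inequality we use supersolutions $\overline v=(\sigma'(x-r(t_0)-c'(t-t_0))+K(x-r(t_0))^2)_+$, where Step 1 gives the quadratic upper control at time $t_0$. The lateral boundary values are obtained from the time bound in Proposition \ref{prop:vt} together with the bound on $v_x$. This yields $r'(t)=-v_x(r(t)-0,t)$ wherever the limit is approached continuously. Continuity of $t\mapsto v_x(r(t)-0,t)$ follows from the equicontinuity supplied by these comparisons: Step 1 applied with Dini-derivative bounds from both sides shows that the upper and lower Dini derivatives coincide and vary continuously. This gives the final claim of the theorem, and the paper's regularity of $r$ via $r(t)=\sup\{x:v(x,t)>0\}$ is Lipschitz by Proposition \ref{prop:vt}.

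\textbf{Step 3: strict positivity after the waiting time.} This is the main obstacle. Since $r'\ge 0$, we only need to exclude $v_x(r(t_0)-0,t_0)=0$ at some $t_0>t_*(b)$. I would argue by contradiction and with Step 1 again: if the slope vanishes, then $v(x,t_0)\le C(r(t_0)-x)^2$ near $r(t_0)$. The waiting-time criterion from \cite{LouZhou,Vaz-book}, which is quadratic growth of the pressure, then implies $r(t)\equiv r(t_0)$ on a time interval $[t_0,t_0+\delta]$. The delicate part is the converse direction. I would combine the monotonicity $r'\ge0$, Darcy's law and a Hopf-type argument for the nondegenerate problem in the moving domain. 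Once $r$ has started moving at some $t_1<t_0$, the interval $[r(t_1),r(t_0)]$ lies in the positivity set. There, a lower barrier built as a self-similar Barenblatt-type subsolution adapted to $B'(0)=\OM$ propagates a positive linear slope forward in time, so that $v_x(r(t)-0,t)\le -\kappa<0$ for $t\ge t_0$. This contradicts the assumption $v_x(r(t_0)-0,t_0)=0$. The ingredient I must verify carefully is that the reaction term $h$, whose only control is $|h|\le H_T v$, does not destroy this barrier on short time intervals. I expect the choice $\underline v=e^{-H_Tt}\times(\text{PME-type subsolution})$ to handle it.
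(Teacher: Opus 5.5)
\textbf{There is a genuine gap in Step 3, and Step 3 is where the new content of the theorem lies.}

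A Barenblatt/ZKB-type subsolution $\underline v\le v$ only gives $\underline r(t)\le r(t)$. The factor $e^{-H_T t}$ does take care of $h$, but this is information on the \emph{position} of the interface. It says nothing about the slope $v_x(r(t)-0,t)$ at the true boundary unless the two interfaces touch. A Hopf-type argument is also unavailable, because the equation degenerates ($B(v)\to 0$) exactly at $x=r(t)$. So ``a lower barrier propagates a positive linear slope forward in time'' is the statement to be proved, not a step of the proof.

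The missing idea is a \emph{matched} subsolution. At a time $t_0$ with $r'(t_0)=k_0>0$, choose the ZKB profile so that
\begin{itemize}
\item[(i)] $\underline r(t_0)=r(t_0)$;
\item[(ii)] $\underline r'(t_0)=k_0$, which is the same boundary slope by Darcy's law for $\underline v$;
\item[(iii)] $\underline v_{xx}(\cdot,t_0)\equiv -C_*$, where $-C_*$ is the Aronson--B\'enilan lower bound.
\end{itemize}
Then $v-\underline v$ is convex on the support of $\underline v$, with zero value and zero slope at $r(t_0)$, so $\underline v(\cdot,t_0)\le v(\cdot,t_0)$. Comparison gives $\underline r\le r$ with first-order contact at $t_0$, hence
\[
r(t_0+\tau)-r(t_0)-\tau r'(t_0)\ \ge\ -\tfrac{\sigma}{2}\,r'(t_0)\,\tau^2-C\tau^3 ,
\]
with $\sigma$ independent of $t_0$. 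Repeating this at every time and testing against nonnegative $\rho$ gives $r''\ge-\sigma r'$ in $\mathcal{D}'$. So $e^{\sigma t}r'(t)$ is nondecreasing, and positivity of $r'$ cannot be lost. The paper also needs times $s_n\searrow t_*(b)$ with $r'(s_n)>0$, which it gets because $r'\not\equiv 0$ on any subinterval after the waiting time. Together these give $r'>0$ on $(t_*(b),\infty)$. When you build the barrier, note that $\underline v_{xx}\le 0$, so the diffusion comparison $\lambda\underline v\,\underline v_{xx}\le B(\underline v)\underline v_{xx}$ requires $\lambda\ge\sup_{0<v\le V_T}B(v)/v$.

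\textbf{Your contradiction route also rests on an inverted inequality.} Semiconvexity $v_{xx}\ge -C$ gives $v_x(y,t)\le v_x(r(t)-0,t)+C(r(t)-y)$. Hence it yields only the \emph{lower} bound $v\ge -v_x(r(t)-0,t)\,(r(t)-x)-\frac C2(r(t)-x)^2$. When the slope vanishes this is vacuous, and it does not give $v(x,t_0)\le C(r(t_0)-x)^2$.

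That quadratic upper bound needs $v_{xx}\le C$ up to the interface. The paper obtains this only in Lemma~\ref{lem:vxx-bound}, and that lemma's barrier uses $k_0=-v_x(r(t_0)-0,t_0)>0$, which is the conclusion you are after. So invoking the waiting-time criterion here is circular. Moreover, a stop on $[t_0,t_0+\delta]$ would contradict nothing unless you already knew $r'\not\equiv 0$ on intervals after $t_*(b)$.

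The same inversion affects the ``quadratic upper control'' for your supersolution in Step 2. There, however, the linear bound $v\le (k_0+\epsilon)(r(t_0)-x)$ suffices, and it follows just from the existence of the one-sided limit. Also, for a right interface the traveling wave should be $\sigma\,(r(t_0)+c(t-t_0)-x)_+$, positive to the left of $r$.
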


\noindent
\begin{remark}\rm
This theorem consists of two parts. The first part gives Darcy's law, as in PME with a reaction (cf. \cite{LouZhou}) or without (cf. \cite{Vaz-book}). The second part shows that once a right free boundary begins to move after the waiting time, it will have a strictly positive velocity: $r'(t)>0$.
This is well known for PME without a reaction term (cf. \cite{Vaz-book}), and has partly proved for
PMEs with reactions. For example, in \cite{LouZhou} the authors proved a weaker conclusion $r'(t)\geq 0$ for PMEs with reactions. Our theorem shows that the latter conclusion can actually be improved to $r'(t)>0$, even for general reaction-diffusion equations with degenerate diffusion like \eqref{E}.

It should be noted that the positivity of the velocity is not merely a sharpening of the result; more importantly, when comparing the intersection behavior of two solutions at the free boundary by using the zero number argument, this positivity is indispensable (cf. \cite{LouLu, LouZhou}).
\end{remark}

\medskip
\noindent
{\it Proof of Theorem \ref{thm:Darcy0}}.\ \
We only consider the right free boundary $r(t)$ in $[0,T]$ for any given $T>t_*(b)$. The left free boundary is treated similarly.

Using an argument similar to that in \cite[Lemma 2.6 and Theorem 2.7]{LouZhou} for PME with reactions (see also \cite[Theorem 15.19]{Vaz-book} for PME without reactions) one can show that, for any $t\in (t_*(b), T)$,

\begin{itemize}
\item[(a).] $v_x(r(t)-0,t)$ exists, and it is the limit of $v_x(x-0,t)$ as $x\to r(t)-0$;
\item[(b).] Darcy's law holds: $r'(t) = -v_x(r(t) -0, t)$, whether $v_x(r(t) -0, t)<0$ or $v_x(r(t) -0, t)=0$;
\item[(c).] $r'(t)$ is not identically zero on any subinterval of $(t_*(b), T)$ of positive length;
\item[(d).] $r'(t)$ is continuous in $(t_*(b), T)$.
\end{itemize}

By these conclusions, it is easily seen that: there is a time sequence $\{s_n\}$ with $s_n \searrow t_*(b)$ such that $v_x(r(s_n)-0,s_n) <0$. Hence, in order to prove our conclusion, we only need to prove that: once $r'(t_0) = -v_x(r(t_0)-0,t_0)>0$ for some $t_0 \in (t_*(b), T)$, there must hold
$r'(t) = -v_x(r(t)-0,t)>0$ for all $t\in (t_0, T)$.

We prove the conclusion by contradiction. Assume that there exists $t_1\in (t_0, T)$ such that
\begin{equation}\label{construct-ass}
v_x(r(t)-0,t)<0 \mbox{ for } t\in [t_0, t_1)\quad \mbox{ and }\quad  v_x(r(t_1)-0,t_1)=0.
\end{equation}
Using the Aronson-B\'{e}nilan estimate in Proposition \ref{prop:AB}, one can construct a subsolution of ZKB type whose free boundary has strictly positive velocity and curvature bounded from below.
%More specifically, define
%\begin{equation}\label{ZKB-sol-exp}
%\underline{v}(x, t) := e^{-L_T (t-s)} \left(\frac{1-e^{-L_T (t-s)}}{L_T }\right)^{\frac{-\lambda_T}{\lambda_T+2}} \left[p-  \frac{1}{2(\lambda_T+2)}\left(\frac{1-e^{-L_T (t-s)}}{L_T}\right)^{-\frac{2}{\lambda_T+2}}(x- \tilde{x})^2 \right]_+,
%\end{equation}
%and denote the right boundary of its support by
%$$
%\underline{r}(t) := \tilde{x} + \sqrt{2(\lambda_T +2) p}
%\left( \frac{1-e^{-L_T (t-s)}}{L_T}\right)^{\frac{1}{\lambda_T+2}},
%$$
%where $\lambda_T$ and $L_T$ are the Lipschitz constants of $B(\cdot)$ and $h(x,\cdot)$, respectively, $p$, $s$ and $\tilde{x}$ are constants determined as the following.
To this end, let
$$
\lambda_T := \inf_{0<v\le V_T} \frac{B(v)}{v}>0,\qquad
L_T := \sup_{D_T} \frac{|h(x,v)|}{v}<\infty,
$$
where $D_T=\mathbb{R}\times[0,V_T]$.  The strict positivity of $\lambda_T$ follows from $B'(0)>0$ and $B(v)>0$ for $v>0$, while the finiteness of $L_T$ is a consequence of Lemma~\ref{lem:smooth-Ah-2}.
Define
\begin{equation}\label{ZKB-sol-exp}
\underline{v}(x, t) := e^{-L_T (t-s)} \left(\frac{1-e^{-L_T (t-s)}}{L_T }\right)^{\frac{-\lambda_T}{\lambda_T+2}} \left[p-  \frac{1}{2(\lambda_T+2)}\left(\frac{1-e^{-L_T (t-s)}}{L_T}\right)^{-\frac{2}{\lambda_T+2}}(x- \tilde{x})^2 \right]_+,
\end{equation}
and denote the right boundary of its support by
$$
\underline{r}(t) := \tilde{x} + \sqrt{2(\lambda_T +2)p}\,
\Bigl( \frac{1-e^{-L_T (t-s)}}{L_T}\Bigr)^{\frac{1}{\lambda_T+2}}.
$$
where $p$, $s$ and $\tilde{x}$ are constants determined as follows.

By Proposition \ref{prop:AB} we have
$$
v_{xx}(x,t)\geq -C_*,\qquad x\in \R,\ t\in [t_0, t_1]
$$
for some $C_*> 0$. Denote
$$
\hat{a} := \frac{(\lambda_T+1)C_*}{(\lambda_T+1)C_*+L_T},\qquad s(t_0):= t_0 +\frac{\log \hat{a}}{L_T}.
$$
So $\hat{a}\in (0,1)$ and $s(t_0)<t_0$.  Taking $s=s(t_0)$ in $\underline{v}$, then we have
$$
\underline{v}_{xx}(x, t_0)= -\frac{L_T e^{-L_T (t_0-s(t_0))}}{(\lambda_T+1)[1- e^{-L_T (t_0-s(t_0))}]}= -C_*,
$$
and
$$
\underline{r}'(t_0)= \frac{\sqrt{2(\lambda_T+1)p}}{\lambda_T+2}\left[\frac{1-e^{-L_T (t_0-s(t_0))}}{L_T}\right]^{-\frac{\lambda_T+1}{\lambda_T+2}} e^{-L_T (t_0-s(t_0))}.
$$
Moreover, we use the equality $\underline{r}'(t_0)=k_0 := -v_x(x_0-0,t_0) = r'(t_0)>0$ to determine $p=p(t_0)$, and use the equality $\underline{r}(t_0)= x_0:= r(t_0)$ to determine
$$
\tilde{x} := x_0- \sqrt{2(\lambda_T+1)p(t_0)}\left[\frac{1-e^{-L_T (t_0- s(t_0))}}{L_T}\right]^{\frac{1}{\lambda_T+2}}.
$$

Choosing such $s,p,\tilde{x}$, we conclude that $\underline{v}(x, t_0)\leq v(x, t_0)$, and
$$
\underline{v}_t=\lambda_T \underline{v} \underline{v}_{xx}+ \underline{v}_x^2- L_T \underline{v} \leq B(\underline{v})\underline{v}_{xx}+ \underline{v}_x^2+ h(x, \underline{v}).
$$
Here we use the fact $\underline{v}_{xx}\leq 0$ in its support.  By comparison we have
$$
\underline{v}(x, t)\leq v(x, t),\quad \mbox{ for } t\in J:= [t_0, t_1].
$$
Hence, for any small $\tau >0$, we have
\begin{equation}\label{under-r<r}
\underline{r}(t_0+ \tau)- \underline{r}(t_0)- \tau \underline{r}'(t_0)\leq r(t_0+ \tau)- r(t_0)- \tau r'(t_0).
\end{equation}
By a direct calculation we have $\underline{r}''(t_0)=- \sigma \underline{r}'(t_0)$
for some $\sigma = \sigma(L_T, \lambda_T, \hat{a})>0$, and so
\begin{equation}\label{estimate-r''}
\frac{r(t_0+ \tau)- r(t_0)- \tau r'(t_0)}{\tau^2}\geq -\frac{\sigma}{2}\underline{r}'(t_0)- C_1 \tau =
-\frac{\sigma}{2} r'(t_0)- C_1 \tau,
\end{equation}
for some $C_1>0$ independent of $t_0$.

In a similar way, one can show that \eqref{estimate-r''} actually holds for all $t\in [t_0, t_1)$ instead of only at $t_0$, with the same constant $\sigma>0$.

Now, for any small $\tau_0>0$ and any nonnegative test functions $\rho \in C_0^{\infty}([t_0+\tau_0, t_1])$, we show that
\begin{equation}\label{r''>0}
-\int_{t_0}^{t_1} r'(t)\rho'(t) dt= \int_{t_0}^{t_1} r(t) \rho''(t) dt\geq -\int_{t_0}^{t_1} \sigma r'(t)\rho(t) dt.
\end{equation}
In fact, since \eqref{estimate-r''} holds when $t_0$ is replaced by $t\in [t_0, t_1)$, for any $0\leq \tau \leq \tau_0$ we have
\begin{equation}\label{I-tau>}
I(\tau) := \int_{t_0}^{t_1} \frac{r(t+ \tau)- r(t)- \tau r'(t)}{\tau^2} \rho(t) dt \geq - \int_{t_0}^{t_1} \left[ \frac{\sigma}{2} r'(t)+  C_1 \tau\right] \rho(t)dt.
\end{equation}
Extending both $\rho(t)$ and $r(t)$ to $[t_0, t_1+\tau]$ by zero and using the fact
$$
\rho(s-\tau) -\rho(s) = -\tau \rho'(s) +\frac{\tau^2}{2} \rho''(s) + o(\tau^2),
$$
we obtain
\begin{eqnarray*}
I(\tau) & = & \frac{1}{\tau^2}\int_{t_0+ \tau}^{t_1+ \tau} [\rho(s-\tau)- \rho(s)]r(s) ds- \frac{1}{\tau}\int_{t_0}^{t_1} \rho(t)r'(t) dt \\
& =  & -\frac{1}{\tau}\int_{t_0+ \tau}^{t_1+ \tau} \rho'(s)r(s)ds+ \frac12 \int_{t_0+ \tau}^{t_1+ \tau} \rho''(s)r(s) ds- \frac{1}{\tau}\int_{t_0}^{t_1} \rho(t)r'(t) dt+ o(1) \\
& = & \frac12 \int_{t_0}^{t_1} \rho''(t)r(t) dt+ o(1).
\end{eqnarray*}
Combining with \eqref{I-tau>} we have
$$
\int_{t_0}^{t_1} \rho''(t)r(t) dt+ o(1)  \geq  -\int_{t_0}^{t_1} \left[\sigma r'(t) + 2C_1 \tau\right] \rho(t) dt,
$$
which implies \eqref{r''>0} by letting $\tau\to 0$. Furthermore, by letting $\tau_0\to 0$ we see that \eqref{r''>0} holds actually for all nonnegative $\rho\in C_0^\infty ([t_0,t_1])$.

By \eqref{r''>0} we have
$$
\int_{t_0}^{t_1} (r'(t) e^{\sigma t})' \rho(t) dt = \int_{t_0}^{t_1} [r''(t) +\sigma r(t)]e^{\sigma t}\rho (t) dt \geq 0.
$$
Consequently, the quantity $ e^{\sigma t}r'(t)$ is nondecreasing on \([t_0,t_1] \).
This, however, contradicts the fact \(r'(t_0)>0\) and the assumption \( r'(t_1)=0.\)

This completes the proof of Theorem \ref{thm:Darcy0}.
\qed

\section{Higher Regularities}
In this section we study the higher regularities of $v$ and its free boundaries.
%Combined with the Darcy law $r'(t)=-v_x(r(t)-0,t)$ (for $t>t_*(b)$), the regularity of the free boundary reduces to the regularity of derivatives of $v$ up to the interface.
Throughout this section, we still consider initial data chosen from $\X_0$, and so $v(\cdot,t)$ is positive if and only if $x\in (l(t),r(t))$.

\subsection{Upper Bound of $v_{xx}$}
In Section 2 we have given a lower bound for $v_{xx}$, which is used in the approach in Section 3. Now we give
a local boundary estimate for $v_{xx}$.

\begin{lem}\label{lem:vxx-bound}
Assume \eqref{ass-A} and \eqref{F}. Assume further that \eqref{ass-H} holds for $k\geq 2$. Let \(v \) be the solution of \eqref{p-v} with initial data $v_0 \in \X_0$. Then, for any $t_0> t_*(b)$, there exists a neighbourhood $N(P_0)\subset Q_T $ of $P_0 := (r(t_0), t_0)$ and a constant $C'_2 = C'_2 (P_0)>0$, such that
\[
|v_{xx}(x,t)|\le C'_2 \qquad\text{in }N(P_0).
\]
\end{lem}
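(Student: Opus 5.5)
The lower bound is already available from Proposition \ref{prop:AB}, since $v_{xx}\ge -D(t+\tau)$ in $Q_T$. So the work is the upper bound $v_{xx}\le C'_2$ near $P_0$. The key input is Theorem \ref{thm:Darcy0}: at $t_0>t_*(b)$ we have $r'(t_0)=-v_x(r(t_0)-0,t_0)>0$, and $r'$ is continuous. Hence there exist $\delta>0$ and $c_0>0$ such that $v_x\le -c_0$ in the region
\[
\{(x,t):\ r(t)-\delta<x<r(t),\ |t-t_0|<\delta\}.
\]
To obtain this, I would combine the continuity in (a) of the proof of Theorem \ref{thm:Darcy0} (that $v_x(x,t)\to v_x(r(t)-0,t)$) with the uniform lower bound on $v_{xx}$. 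From $v_{xx}\ge -C$ we get $v_x(x,t)\le v_x(r(t)-0,t)+C(r(t)-x)$, which yields the claim after shrinking $\delta$.

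With $v_x$ bounded away from zero, I would perform a hodograph-type change of variables. Set $x=X(v,t)$, the inverse of $v(\cdot,t)$ on $(r(t)-\delta,r(t)]$. Then $X_v=1/v_x$ and $v_{xx}=-X_{vv}/X_v^3$. Substituting into \eqref{p-v}, the function $X$ solves
\[
X_t=B(v)\frac{X_{vv}}{X_v^2}-\frac{1}{X_v}-h(X,v)X_v
\]
on a fixed domain $0<v<\varepsilon$, with boundary value $X(0,t)=r(t)$. This equation is uniformly parabolic in the interior and degenerates like $v\,X_{vv}$ at $v=0$, because $B(v)\sim\OM v$ by \eqref{B'=A*}. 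We already know $X_v=1/v_x\in[-M_1^{-1},-c_0^{-1}]$ up to the boundary, and $B$ and $h$ are $C^2$ by Lemma \ref{lem:smooth-Ah-2}. This is the setting of degenerate equations of the form $vX_{vv}+\cdots$ (Daskalopoulos--Hamilton type). Alternatively, and more elementarily, one can use the Bernstein method: apply the maximum principle to $z=\zeta^2 w$ with $w=v_{xx}$, where $\zeta$ is a cutoff in $t$ and in $v$ (a function of $v$, not of $x$, so that it is adapted to the moving boundary).

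Concretely, in the original variables I would work with equation \eqref{equ-eta} for $\eta=v_{xx}$. The term $d\eta^2$ with $d=B'(v)+2>0$ has the wrong sign for an upper bound, so a direct comparison fails. Instead I would exploit the boundary information. On the free boundary, Darcy's law combined with the equation gives $B(v)v_{xx}\to 0$, and differentiating $v(r(t),t)=0$ links $v_t$ to $v_x^2$. Using $v_t\le C_1t+C_2$ from Proposition \ref{prop:vt}, the equation in \eqref{p-v} gives
\[
B(v)v_{xx}=v_t-v_x^2-h\le C.
\]
This already bounds $v_{xx}$ above away from the boundary, but it only gives $v_{xx}\le C/B(v)$ near it. To remove the $1/B(v)$ singularity, I would consider the quotient $Q:=(v_t-v_x^2-h)/v$, or, equivalently, in hodograph variables, a bound on $X_{vv}$. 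The plan is to show $Q$ bounded using the maximum principle on the equation satisfied by $Q$. Its leading part is $Q_t=B(v)Q_{xx}+(\ldots)Q_x+\ldots$, and the quadratic term now comes with a favorable sign thanks to $v_x\le-c_0$: the term $2v_xQ_x$ together with $v_x^2/v$ produces a drift pushing inward.

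The main obstacle is this last step: controlling $v_{xx}$ uniformly up to the degenerate boundary, that is, proving $Q$ (or $X_{vv}$) bounded. I expect it to require either a barrier of the form $C(1+ \text{dist})$ built from the nondegeneracy $v_x\le -c_0$, or an appeal to regularity for the degenerate hodograph equation. I would try the Bernstein/maximum-principle route on $\zeta^2 X_{vv}$ first, since $X_v$ is bounded and bounded away from zero and the degeneracy is of the clean form $\OM v\,\partial_{vv}$.
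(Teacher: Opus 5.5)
\textbf{There is a genuine gap.} You have the right ingredients but not the proof. The upper bound for $v_{xx}$ up to the free boundary is the whole content of the lemma, and you leave it as an ``obstacle''; the routes you sketch would not close it.

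Your preliminary facts are fine, and the paper uses the same ones. You get $v_x\le -c_0$ near $P_0$ from $r'(t_0)=k_0>0$, continuity of $r'$, and a one-sided bound on $v_{xx}$. You also have $B(v)v_{xx}\to 0$ as $x\to r(t)-0$, which together with $v\sim k_0(r(t)-x)$ says only that $v_{xx}=o\bigl(1/(r(t)-x)\bigr)$.

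The proposed routes do not turn this into a bound:
\begin{itemize}
\item Your quotient $Q=(v_t-v_x^2-h)/v$ equals $\frac{B(v)}{v}v_{xx}$, a positive multiple of $v_{xx}$ (since $B(v)/v\to\omega$). Passing to $Q$ cannot create a favourable sign: its equation still carries the quadratic term $\frac{v\,(B'(v)+2)}{B(v)}Q^2$ with positive coefficient.
\item More fundamentally, every maximum-principle argument you propose for a \emph{bounded} quantity needs control of that quantity on the lateral boundary $x=r(t)$, which is exactly what is unknown. This applies to $Q$, to $\zeta^2X_{vv}$ with a cutoff that cannot vanish at $v=0$, and to a bounded barrier such as $C(1+\mathrm{dist})$.
\item The appeal to Daskalopoulos--Hamilton-type theory for the hodograph equation is only a pointer. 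You would have to verify its hypotheses for general $B,h$, and it needs as input regularity of $X$ up to $v=0$ of the kind being proved.
\end{itemize}

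The missing idea is a barrier that \emph{blows up} at the free boundary, so that the information $v_{xx}=o(1/(r(t)-x))$ suffices there. The paper compares $\eta=v_{xx}$ with
\[
\bar\eta=\frac{\alpha}{r(t)-x}+\frac{\beta}{r^*(t)-x},\qquad r^*(t)=r(t_0-\tau)+(k_0+2\varepsilon)(t-t_0+\tau),
\]
in a thin region $N_{\delta,\tau}(P_0)$ to the left of $r(t)$. Insert $\alpha/(r-x)$ into \eqref{equ-eta}. Three terms combine: the time derivative ($r'\approx k_0$), the degenerate diffusion ($B(v)\approx\omega k_0(r-x)$), and the drift ($2(B'+1)v_x\approx -2(\omega+1)k_0$). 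Up to $O(\varepsilon+\delta)$ they give $\frac{\alpha}{(r-x)^2}\bigl(-k_0-2\omega k_0+2(\omega+1)k_0\bigr)=\frac{\alpha k_0}{(r-x)^2}$. This absorbs the bad term $d\bar\eta^2$ once $\alpha,\beta\lesssim k_0/E_0$, and it is the precise form of your ``drift pushing inward''.

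The boundary comparisons then go as follows:
\begin{itemize}
\item The $\beta$-term dominates $\eta\le\varepsilon/B(v)$ on the bottom and the inner side of the region.
\item For each fixed $\alpha$, the limit $B(v)v_{xx}\to 0$ gives $\eta<\alpha/(r-x)$ in a strip next to $r(t)$.
\end{itemize}
Comparison yields $\eta\le\bar\eta$. Letting $\alpha\downarrow 0$ gives $\eta\le\beta/(r^*(t)-x)$, which is bounded for $t\ge t_0-\tau/2$ because $r^*(t)-r(t)\ge\varepsilon(t-t_0+\tau)$.

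Separately, your lower bound relies on Proposition~\ref{prop:AB}, and your use of $v_t\le C_1t+C_2$ relies on Proposition~\ref{prop:vt}. Both require \eqref{ass-v0xx}, which the lemma does not assume. The paper instead obtains a $v_0$-independent lower bound near $t_0$ by running the subsolution argument for $\rho(t)v_{xx}$, with a time cutoff $\rho$ vanishing near $t_*(b)$.
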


\begin{proof}
We first discuss the lower bound. From the Aronson-B\'{e}nilan estimate in Proposition \ref{prop:AB} we already have a lower bound for $v_{xx}$, but it depends on the initial data $v_0(x)$.

We now use a Bernstein-type argument to obtain another lower bound that holds away from $t=0$ and requires no additional condition on $v_0$.
More precisely, we take $g_0 := \frac13 (t_0 - t_*(b))$ and choose a function
$$
\rho(t) :=\left\{
\begin{array}{ll}
0, & t\in [0, t_*(b) + g_0],\\
\mbox{smooth and increasing}, & t\in [t_0 -2g_0, t_0 -g_0],\\
1, & t\geq t_0 - g_0,
\end{array}
\right.
$$
such that whenever $|\frac{\rho_t}{\rho}|\leq \Theta$ and
$|\rho^{(j)} (t)| \leq \theta_j$ for all $j\geq 1,\ t\in [0,\infty)$ (when $\rho=0$ we interpret $\frac{\rho_t}{\rho}$ as $0$). Then we consider the function $\tilde{\eta}:= \rho(t) \eta(x,t)$. By the equation \eqref{equ-eta} we have
\begin{equation}\label{eq-tilde-eta}
\tilde{\eta}_t = a \tilde{\eta}_{xx} + b \tilde{\eta}_x + \Big(c +\frac{\rho_t}{\rho}\Big) \tilde{\eta} + d\frac{\tilde{\eta}^2}{\rho} + e \rho ,\quad x\in \R, \ t>0,
\end{equation}
where $\frac{\rho_t}{\rho}$ and $\frac{\tilde{\eta}}{\rho}$ are regarded as $0$ and $\eta$, respectively, when $t\in [0,t_*(b)+g_0]$.  Then $-D(t+\tau)$ for suitable  positive $D$ and $\tau$ (which may be different from those in Proposition \ref{prop:AB}) is a subsolution to \eqref{eq-tilde-eta} and so
$$
\tilde{\eta}(x,t) \geq -D(t+\tau),\qquad x\in \R, \ t\geq 0,
$$
which gives the lower bound for $\eta$ when $t\geq t_0 -g_0$, and the bound is independent of $v_0(x)$.

\medskip

Now we establish the upper bound. Differentiating $v(r(t),t)=0$ and using
Darcy's law we have $v_t(r(t),t) = v_x^2 (r(t)-0,t)$ for $t>t_*(b)$. Substituting it into the equation of $v$ we have
\begin{equation}\label{Bvxx}
B(v)v_{xx}\to0 \qquad  \text{as } (x,t)\to(r(t)-0,t),
\end{equation}
We will show the conclusion by this limit together with
$$
v(x,t)\sim -v_x(r(t)-0,t)(r(t)-x).
$$

First we recall that the function $\eta := v_{xx}$ satisfies
\begin{align*}
\mathcal{N}(\eta) & := \eta_t-  B(v) \eta_{xx} - 2\big[ B'(v) + 1 \big] v_x \eta_x - \big[ B''(v) v_x^2 + h_v \big] \eta \\
&\quad - \big[B'(v) + 2\big] \eta^2 - \big[ h_{xx} + 2 h_{xv} v_x + h_{vv} v_x^2\big]= 0.
\end{align*}
By our assumption \eqref{ass-H} for $k\geq 2$,
$$
|B''(v)v_x^2 + h_v|+ |B'(v) + 2|+ |h_{xx} + 2h_{xv}v_x + h_{vv}v_x^2|\leq E_0,
$$
for some $E_0>0$.  Denote $k_0 := -v_x(x_0-0,t_0)>0$, where $x_0 := r(t_0)$. For any given $\varepsilon\in (0, k_0)$, we can find $\delta_1 = \delta_1 (\varepsilon)> 0$ and $\tau_1 = \tau_1 (\varepsilon)\in (0, t_0 -t_*(b))$ such that for all
$$
(x, t)\in N_{\delta_1, \tau_1}(P_0) :=  \{(x, t)|r(t)-\delta_1 \leq x< r(t),\ t_0-\tau_1 \leq t\leq t_0+ \tau_1 \},
$$
there hold
\begin{equation}\label{neigh}
\left\{
\begin{array}{ll}
(k_0- \varepsilon)(r(t)- x)\leq v(x, t)\leq (k_0+ \varepsilon)(r(t)- x),\\
k_0- \varepsilon\leq r'(t)\leq k_0+ \varepsilon,\\
|B(v)v_{xx}(x, t)|\leq \varepsilon,\\
\left|\frac{B(v)}{v}- B'(v)\right|\leq \left|\frac{B(v)}{v}- \OM \right|+ \left|B'(v)- \OM \right|< 2\varepsilon,
\end{array}
\right.
\end{equation}
where $\OM= B'(0^+)$.   We take the following parameters
$$
\varepsilon := \min\left\{\frac12, \frac{\OM}{2}, \frac{k_0}{16(4\OM+ 6)}, \frac{k_0^2 \OM}{k_0\OM+ 64E_0}\right\},\quad \delta := \min\left\{\delta_1(\varepsilon), \frac{k_0}{32E_0}\right\},\quad \tau := \min\left\{\tau_1 (\varepsilon), \frac{\delta}{6\varepsilon}\right\}.
$$

Define $r^*(t) := r(t_0-\tau )+ (k_0+ 2\varepsilon)(t- t_0+ \tau )$. Then one can construct a barrier of the form
$$
\bar\eta(x, t) := \frac{\alpha}{r(t)-x} + \frac{\beta}{r^*(t)-x},\quad (x,t)\in N_{\delta, \tau}(P_0),
$$
for some $\alpha, \beta>0$ to be determined. Then, in the domain $N_{\delta, \tau}(P_0)$ (note that $r(t)-x\leq \delta$ in this domain), a direct calculation yields
\begin{align*}
\mathcal{N}(\bar{\eta}) &\geq \frac{\alpha}{(r(t) - x)^2} \left\{ -(k_0+ \varepsilon) - \frac{2B(v)}{v}(k_0+ \varepsilon) + 2[B'(v) + 1](k_0- \varepsilon) - E_0\delta- 2E_0\alpha \right\} \\
&\quad + \frac{\beta}{(r^*(t) - x)^2} \bigg\{ -(k_0 + 2\varepsilon) - \frac{2B(v)}{v}\frac{(k_0+ \varepsilon)(r(t)-x)}{r^*(t) - x} + 2[B'(v) + 1](k_0- \varepsilon) \\
&\quad - E_0(\delta+ 6\varepsilon \tau )- 2E_0\beta \bigg\} - E_0 \geq 0,
\end{align*}
provided $\beta := \frac{k_0}{16 E_0}> 0$ and $\alpha\in (0, \frac{k_0}{8E_0}]$.

On the other hand, at the bottom of $N_{\delta, \tau}(P_0)$ we have
$$
\bar{\eta}(x,t_0-\tau)> \frac{\beta}{r(t_0- \tau)-x}\geq \frac{k_0}{32 E_0 [r(t_0 - \tau)- x]}> \frac{2\varepsilon}{(k_0- \varepsilon)\OM[r(t_0 - \tau)- x]}\geq \eta(x,t_0-\tau).
$$
On the left boundary of $N_{\delta, \tau}(P_0)$ we have
$$
\bar{\eta}(r(t)- \delta,t)= \frac{\alpha}{\delta}+\frac{\beta}{r^*(t)-r(t)-\delta}> \frac{\beta}{6\varepsilon\tau}> \frac{k_0}{32E_0\delta}\geq \frac{\varepsilon}{(k_0- \varepsilon)\delta}\frac{2}{\OM}\geq \frac{\varepsilon}{v}\frac{v}{B(v)}\geq \eta(r(t)- \delta,t).
$$
Finally, we deal with the right boundary of $N_{\delta, \tau}(P_0)$. More precisely, we compare $\eta$ and $\bar{\eta}$ on a narrow strip inside the right boundary. For any given $\alpha\in(0, \frac{k_0}{8E_0}]$, there exists $\tilde{\delta}(\alpha)>0$ sufficiently small such that
$$
B(v)v_{xx}\leq \alpha(k_0- \varepsilon)\frac{\OM}{2} \quad \mbox{ for }\ r(t)-\tilde{\delta} \leq x< r(t).
$$
Then
$$
\eta= v_{xx}\leq \frac{\alpha(k_0- \varepsilon)}{v}\frac{\OM}{2}\frac{v}{B(v)}\leq \frac{\alpha}{r(t)-x}< \bar{\eta} \quad \mbox{ in } N_{\tilde{\delta}, \tau}(P_0).
$$
By the comparison principle we conclude that
$$
\eta(x,t) = v_{xx}(x, t)\leq \bar{\eta}(x, t)= \frac{\alpha}{r(t)- x}+ \frac{\beta}{r^*(t)- x} \quad \mbox{ in } N_{\delta, \tau}(P_0),
$$
for every $\alpha\in(0, \frac{k_0}{8E_0}]$ and $\beta= \frac{k_0}{16 E_0}$.
Noting $r^*(t)-x \geq \tau$ in  $N_{\delta, \frac{\tau}{2}} (P_0)$, and letting $\alpha\downarrow 0$ gives the desired local upper bound
$C'_2 =C'_2 (k_0, E_0, \delta_1, \tau_1) = C'_2 (P_0)$ in the neighborhood $N_{\delta, \frac{\tau}{2}} (P_0)$ of $P_0$.
\end{proof}

Combining this local boundary estimate with the interior estimate we have the following local bound for $v_{xx}$.

\begin{prop}\label{prop:vxx-whole}
Assume \eqref{ass-A} and \eqref{F}. Assume further that \eqref{ass-H} holds for $k\geq 2$. Let \(v \) be the solution of \eqref{p-v} with initial data $v_0 \in \X_0$. For any $T_2 >T_1 >\max\{t_*(b),t_*(-b)\}$, there exists $C_2$ depending on $T_1, T_2$ and the constants in \eqref{ass-A}, \eqref{F} and \eqref{ass-H} such that
$$
|v_{xx}(x,t)|\leq C_2 \mbox{\ in \ } D(T_1,T_2):= \{(x,t) \mid l(t)<x<r(t),\ t\in [T_1, T_2]\}.
$$
\end{prop}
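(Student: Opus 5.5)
Our plan is a compactness argument. We cover the closed region $\overline{D(T_1,T_2)}$ by finitely many neighbourhoods, and on each of them we already have a bound for $|v_{xx}|$. The region splits into two boundary strips near the curves $x=r(t)$, $x=l(t)$ and an interior part.

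\emph{Boundary pieces.} For every $t_0\in[T_1,T_2]$ we have $t_0>t_*(b)$. Lemma \ref{lem:vxx-bound} therefore gives a neighbourhood $N(P_0)$ of $P_0=(r(t_0),t_0)$ on which $|v_{xx}|\le C_2'(P_0)$. From its proof this neighbourhood has the form $N_{\delta,\tau/2}(P_0)=\{r(t)-\delta\le x<r(t),\ |t-t_0|\le \tau/2\}$, with $\delta,\tau$ depending on $P_0$. The time intervals $(t_0-\tau(t_0)/2,\,t_0+\tau(t_0)/2)$ cover the compact interval $[T_1,T_2]$, so finitely many of them suffice, say those at $t_0^{1},\dots,t_0^{m}$. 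Set $\delta_r:=\min_j\delta(t_0^j)>0$. Then $|v_{xx}|\le \max_j C_2'(P_0^j)$ on the strip $\{r(t)-\delta_r\le x<r(t),\ t\in[T_1,T_2]\}$. The left free boundary is handled symmetrically (the mirror of Lemma \ref{lem:vxx-bound} at $l(t)$ holds since $t_0>t_*(-b)$), which gives a width $\delta_l>0$ and a bound on the strip adjacent to $l(t)$.

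\emph{Interior piece.} The remaining set is $K:=\{l(t)+\delta_l\le x\le r(t)-\delta_r,\ t\in[T_1,T_2]\}$. It is compact because $r,l$ are continuous. By positivity persistence, $v>0$ on $K$, and $K$ lies inside the positivity set, where $v$ is a classical solution of the uniformly parabolic equation \eqref{p-v}. Here uniform parabolicity holds because $B(v)\ge \min_K B(v)>0$ on a slightly larger compact neighbourhood. Interior Schauder estimates then bound $v_{xx}$ on $K$. Alternatively, around each point of $K$ we take a small rectangle strictly inside the positivity set and apply standard parabolic estimates: Proposition \ref{prop:vx} bounds $v_x$ there, and the coefficients $B,h$ are $C^k$ with $k\ge2$ by Lemma \ref{lem:smooth-Ah-2}. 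Compactness of $K$ then gives a uniform bound. We can also obtain the lower bound from the $\rho$-cutoff Bernstein argument in the first half of the proof of Lemma \ref{lem:vxx-bound}, so that it does not depend on $v_0$.

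The delicate point is making sure the neighbourhoods of Lemma \ref{lem:vxx-bound} contain full-width strips $\{r(t)-\delta\le x<r(t)\}$ over a time interval, and not just a small ball around $P_0$. We read this off the explicit form $N_{\delta,\tau/2}(P_0)$ in its proof. The remaining dependence of $C_2$ stated in the proposition then comes from the finitely many constants collected above. These depend on $T_1,T_2$ through $k_0(t)=r'(t)$, which by Theorem \ref{thm:Darcy0} is continuous and positive and hence bounded below on $[T_1,T_2]$, and through the structural constants.
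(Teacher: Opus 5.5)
Your proposal is correct and follows the paper's own one-line justification, which says that Proposition \ref{prop:vxx-whole} follows by combining the local boundary estimate of Lemma \ref{lem:vxx-bound} with interior estimates; your finite cover of $[T_1,T_2]$ by the strips $N_{\delta,\tau/2}(P_0)$, the mirror argument at $l(t)$, and the interior bound on the compact set $K$ simply spell that out (on $K$ the bound is even immediate from continuity of $v_{xx}$, since $v$ is classical in the open positivity set). Your last sentence overstates the dependence of $C_2$: the quantities $\delta_1(\varepsilon),\tau_1(\varepsilon)$ in that lemma come from the non-quantitative limits in \eqref{neigh}, so $C_2$ depends on the particular solution and not only on $k_0$ and the structural constants, though the paper's stated dependence is equally loose and your compactness argument needs no such uniformity.
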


\subsection{The Bound of $v_{xxx}$}
We continue to establish the bound of $v_{xxx}$. Our main result is as the following.

\begin{prop}\label{prop:vxxx}
Suppose all conditions of Proposition \ref{prop:vxx-whole} hold, with the condition \eqref{ass-H} holds for $k\geq 2$ replaced by \eqref{ass-H} holds for $k\geq 3$.
Then, for any $T_2 >T_1 >\max\{t_*(b),t_*(-b)\}$, there exists $C_3$ depending on $T_1, T_2$ and the constants in \eqref{ass-A}, \eqref{F} and \eqref{ass-H} such that
$$
|v_{xxx}(x,t)|\leq C_3 \mbox{\ in \ } D(T_1,T_2):= \{(x,t) \mid l(t)<x<r(t),\ t\in [T_1, T_2]\}.
$$
\end{prop}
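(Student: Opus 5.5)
Following the pattern of Lemma \ref{lem:vxx-bound} and Proposition \ref{prop:vxx-whole}, I will separate an interior estimate from a local boundary estimate near each point $P_0=(r(t_0),t_0)$, $t_0\in[T_1,T_2]$ (the left boundary is symmetric). Compactness of $[T_1,T_2]$ then gives a uniform constant. In the interior, i.e.\ on $\{l(t)+\delta\le x\le r(t)-\delta\}$, $v$ is bounded below by a positive constant. The equation in \eqref{p-v} is therefore uniformly parabolic there, with coefficients $B\in C^3$ and $h\in C^3$ by Lemma \ref{lem:smooth-Ah-2} (here \eqref{ass-H} with $k\ge3$ is used). Standard Schauder or Bernstein interior estimates, together with Propositions \ref{prop:vx} and \ref{prop:vxx-whole}, then bound $v_{xxx}$.

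Near the free boundary, I will differentiate \eqref{equ-eta} once more in $x$ and set $\xi:=v_{xxx}$. This gives a linear equation
\[
\xi_t=B(v)\xi_{xx}+\big[(B'(v)+2)\cdot 1+2(B'(v)+1)\big]v_x\,\xi_x+c_3\,\xi+e_3 .
\]
The drift coefficient is $(3B'(v)+4)v_x$. The zero-order coefficient $c_3$ contains $(3B'+6)\eta+3B''v_x^2+h_v$, and $e_3$ involves $B'''v_x^3$, $B''v_x\eta$ and third derivatives of $h$. By Proposition \ref{prop:vxx-whole} and $k\ge3$, both $c_3$ and $e_3$ are bounded, say by $E_1$. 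The key structural point is that near $r(t)$ we have $v_x\approx -k_0<0$ and $B(v)\approx \OM k_0(r(t)-x)$ by \eqref{neigh}. So the first-order term pushes information inward from the boundary strongly, exactly as in the barrier computation of Lemma \ref{lem:vxx-bound}.

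I will therefore reuse the same barrier shape
\[
\bar\xi=\frac{\alpha}{r(t)-x}+\frac{\beta}{r^*(t)-x}+\gamma ,
\]
on the same kind of domain $N_{\delta,\tau}(P_0)$, and compare $\pm\xi$ with $\bar\xi$. In $\mathcal N(\bar\xi)$, the dominant terms at order $(r-x)^{-2}$ are
\[
-r'-2\frac{B(v)}{v}(k_0+\varepsilon)+(3B'+4)(k_0-\varepsilon)\approx (\OM+3)k_0>0 ,
\]
which is even better than in the $v_{xx}$ case. The linear terms $c_3\bar\xi$ and $e_3$ are absorbed by choosing $\delta$ small and $\gamma$ large. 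Since the equation is now linear, there is no quadratic term to restrict $\alpha$ and $\beta$. On the bottom and left sides, $|\xi|$ is bounded by the interior estimate, so a large $\beta$ or $\gamma$ dominates.

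The main obstacle is the right boundary $x\to r(t)$, where we need $|\xi|\le\alpha/(r(t)-x)$ in a thin strip for every $\alpha>0$, i.e.\ $(r(t)-x)\,v_{xxx}\to0$. Equivalently, $B(v)v_{xxx}\to0$. I will try to derive this by differentiating the equation of $v$ in $x$: this gives $B(v)v_{xxx}=v_{xt}-(B'+2)v_xv_{xx}-h_x-h_vv_x$. So it suffices to show that $v_{xt}$ has a boundary limit equal to $(B'(0)+2)v_xv_{xx}+h_x+h_vv_x$ there. I would obtain this by differentiating Darcy's law and the identity $v_t=v_x^2$ on the boundary, using Theorem \ref{thm:Darcy0} and continuity of $v_{xx}$ up to the interface. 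That continuity in turn follows from a local $C^{\alpha}$ estimate in the degenerate region, from rescaling $x\mapsto r(t)-x$ as in the PME theory. If this limiting argument fails, a fallback is a weaker a priori bound $|\xi|\le C/(r-x)^{1-\theta}$ from scaling, which also suffices to let $\alpha\downarrow0$. Finally, the maximum principle gives $|\xi|\le \beta/(r^*-x)+\gamma$ in $N_{\delta,\tau/2}(P_0)$, which is the desired local bound.
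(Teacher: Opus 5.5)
Your overall architecture matches the paper's: an interior estimate, plus a local barrier of the form $\alpha/(r-x)+\beta/(r^*-x)$ near $P_0$, using the fact that the $\xi$-equation is linear. There is, however, a genuine gap at exactly the step you flagged. Your comparison requires $(r(t)-x)\,v_{xxx}\to0$ at the interface, and the route you propose for this is circular.

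\begin{itemize}
\item The existence of a boundary limit of $v_{xt}$, and the continuity of $v_{xx}$ up to $x=r(t)$, are what the paper deduces \emph{from} the bound on $v_{xxx}$ (Theorem \ref{thm:r-smooth}).
\item Differentiating Darcy's law presupposes that $r''$ exists. Differentiating $v_t=v_x^2$ along the curve brings in $v_{tt}$, and in any case says nothing about the interior limit of $v_{xt}$.
\item Rescaling does not supply the missing modulus either. On boxes of size $\lambda=r-x$ the $\eta$-equation is uniformly parabolic and $|\eta|\le C_2'$, so Schauder gives only $|\xi|\le C/(r-x)$. That bound is consistent with $v_{xx}$ behaving like $\sin\log(r-x)$ and having no boundary limit at all.
\item Your fallback $|\xi|\le C(r-x)^{\theta-1}$ would follow from scaling only if you already had $\mathrm{osc}\,\eta\le C\lambda^{\theta}$ on those boxes. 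That is a $C^\theta$ estimate for $v_{xx}$ up to the free boundary, a genuine degenerate-regularity theorem that you neither prove nor can simply quote for general $B$ and $h$.
\end{itemize}

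The paper never needs $o(1/(r-x))$. Lemma \ref{lem:vxxx-est} proves exactly the $O(1/(r-x))$ bound, by the rescaling-plus-Schauder argument above. The proof of the proposition then bootstraps this to $O(1)$ using linearity, as follows.
\begin{itemize}
\item It works on the truncated domain $r(t)-\delta'\le x\le r(t)-\frac1n$. On the cut line $x=r(t)-\frac1n$ the lemma gives $\xi\le nC_3'$.
\item There it compares $\xi$ with $\alpha_m/(r(t)-x-\frac{1}{3n})+\beta_m/(r^*(t)-x)$.
\item On the cut line, the shifted singular term is $\frac32$ times $\alpha_m/(r-x)$. So each round lowers the singular coefficient by the factor $\frac23$, at the cost of increasing $\beta_m$ by $(\frac23)^{m+1}C_3'$.
\item With $\alpha_m=(\frac23)^mC_3'$, $\beta_m=2[1-(\frac23)^m]C_3'$, and $n,m\to\infty$, this yields $v_{xxx}\le 2C_3'/(r^*(t)-x)$, which is bounded on $N_{\delta',\tau'/2}(P_0)$.
\end{itemize}
This bootstrap from $O(1/(r-x))$ to $O(1)$ is the idea your plan lacks.

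The same term $\beta/(r^*-x)$ also handles the bottom edge. Since $r^*(t_0-\tau')=r(t_0-\tau')$, it is singular at the corner and dominates $C_3'/(r-x)$ there. Your claim that $\xi$ is bounded on the bottom edge by interior estimates fails near that corner.

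A minor slip: the drift in the $\xi$-equation is $(3B'(v)+2)v_x$, because $B(v)\eta_{xx}$ contributes $B'v_x\xi_x$. Also, $c_3$ contains $(4B'+6)\eta$. The leading $(r-x)^{-2}$ coefficient is then about $(\OM+1)k_0$, which is still positive, so this is harmless.
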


\medskip
Denote $\xi (x,t):= v_{xxx}(x,t)$.
Differentiating the pressure equation yields
\begin{equation}\label{vxxx-equ}
\mathcal{N}_3 (\xi) := \xi_t- B(v)\,\xi_{xx} - \bigl[3B'(v)+2\bigr]v_x\,\xi_x - c_3(x, t)\xi -e_3(x, t)= 0,
\end{equation}
where
$$
c_3(x, t) := 3B''(v)v_x^2 + \bigl(4B'(v)+6\bigr)v_{xx} + h_v(x,v)
$$
and
$$
\begin{aligned}
e_3(x, t) & :=  B'''(v)v_x^3v_{xx} + 3B''(v)v_x v_{xx}^2 \\
&\quad + h_{xxx} + 3h_{xxv}v_x + 3h_{xvv}v_x^2 + h_{vvv}v_x^3 + 3h_{xv}v_{xx} + 3h_{vv}v_x v_{xx}.
\end{aligned}
$$
If we further assume $B'''(v)$ and all third-order partial derivatives of $h(x, v)$ are bounded for $(x, v)\in \R\times[0, V_T]$, then we can establish the lower bound
$$
\xi(x, t)\geq -D_3(t+ t_0), \quad x\in\R,\ t\in(0, T],
$$
for sufficiently large constant $D_3$ by repeating the process in \ref{prop:AB}.
We want to construct a barrier function as for $\eta = v_{xx}$ in Lemma \ref{lem:vxx-bound}.
The crucial difference is that the estimates \eqref{Bvxx} for $\eta$ near $x=r(t)$ can proceed directly by the structure of the equation, whereas those for $\xi$ necessitate a more intricate near-boundary estimate. For this purpose we first prepare a lemma.

\begin{lem}\label{lem:vxxx-est}
Assume the hypotheses of Proposition \ref{prop:vxxx} hold. Let $P_0$, $\delta$ and $\tau$ be the same as in
Lemma \ref{lem:vxx-bound}. Then, there exist constants $C'_3 > 0$, $\delta'\in(0, \delta)$ and $\tau'\in (0, \frac{\tau}{2})$, depending on $k_0$ and $C'_2(P_0)$ such that
$$
|\xi(x, t)|\leq \frac{C'_3}{r(t)-x}  \quad \mbox{ in } N_{\delta', \tau'}(P_0).
$$
\end{lem}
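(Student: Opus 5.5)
We will derive the estimate $|\xi|\le C'_3/(r(t)-x)$ from the bound $|v_{xx}|\le C'_2$ of Lemma \ref{lem:vxx-bound} by a local parabolic rescaling. The point is that near the interface the equation for $v$ is uniformly parabolic on a scale comparable to the distance to $r(t)$. Fix $(x_1,t_1)\in N_{\delta',\tau'}(P_0)$ and set $d:=r(t_1)-x_1$. By \eqref{neigh} we have $v\sim k_0 d$ on the parabolic box
\[
Q_d:=\{|x-x_1|<d/2,\ |t-t_1|<c\,d\}
\]
for a small $c$ depending on $k_0$. Since $r'\approx k_0$, $r(t)-x\ge d/4$ on $Q_d$ for $c$ small, so $Q_d$ stays inside the positivity set. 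Also $B(v)\sim \OM v\sim \OM k_0 d$ there, because $B(v)/v\to\OM$. We choose $\delta',\tau'$ so that $Q_d\subset N_{\delta,\tau/2}(P_0)$ for all such points, where the bounds of Lemma \ref{lem:vxx-bound} and Proposition \ref{prop:vx} hold.

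Rescale by $y:=(x-x_1)/d$ and $s:=(t-t_1)/d$, which is the natural hyperbolic scaling because the interface moves at speed $\approx k_0$. Set $w(y,s):=v(x_1+dy,t_1+ds)/d$. Then
\[
w_s=\frac{B(dw)}{d}\,w_{yy}+w_y^2+h(x,dw),
\]
and the diffusion coefficient $B(dw)/d\sim \OM w$ is bounded above and below by positive constants depending on $k_0,\OM$ on the unit box $\{|y|<1/2,\ |s|<c\}$. Moreover $w_y=v_x$ and $w_{yy}=d\,v_{xx}$ are bounded by $M_1$ and $C'_2$. The coefficient $B(dw)/d$ has $y$-derivative $B'(dw)w_y$, which is bounded. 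The lower order term $h$ is $C^3$ by \eqref{ass-H} with $k\ge 3$ and Lemma \ref{lem:smooth-Ah-2}, and it is multiplied by nothing singular.

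Next we differentiate the rescaled equation in $y$. The function $z:=w_{yy}=d\,\eta$ solves a uniformly parabolic linear equation: the rescaled version of \eqref{equ-eta} has bounded coefficients, the quadratic term is treated as bounded coefficient times $z$, and the source term is $O(d)$. Interior Schauder or $L^p$ gradient estimates on the half-size box then give $|z_y|\le C$, with $C$ depending only on the ellipticity constants and coefficient bounds, hence only on $k_0$ and $C'_2$. Since $z_y=d^2\xi$, this yields $|\xi(x_1,t_1)|\le C/d^2$.

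This rate is too weak by one power of $d$, and improving it is the main obstacle. The remedy is to exploit the extra structure: the bound $|B(v)v_{xx}|\le\varepsilon$ from \eqref{Bvxx}, together with $B(v)\sim\OM k_0 d$, gives $|\eta|\lesssim 1/d$ only, which is weaker than $C'_2$. The useful improvement instead comes from working on $u$-level regularity.

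We apply the gradient estimate directly to $\eta$ on $Q_d$ in the original variables. The equation \eqref{equ-eta} has diffusion $a\sim d$, drift $b=O(1)$, and bounded $c,d,e$, and $\eta$ is bounded by $C'_2$. In the rescaled variables $(y,s)$ the function $\eta$ itself satisfies a uniformly parabolic equation with bounded coefficients and bounded sup. The interior estimate gives $|\eta_y|\le C(\sup|\eta|+\text{source})\le C$, and $\eta_y=d\,\xi$, so $|\xi|\le C/d$ as required.

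The points to check carefully are these:
\begin{itemize}
\item the drift and zero-order coefficients in \eqref{equ-eta}, after multiplying by the time scale $d$, become $d\cdot b/d=b$ and $d\cdot c$, both bounded;
\item the coefficient $a/d$ is Hölder in $(y,s)$, which follows from the bounds on $v_x,v_t$ in Propositions \ref{prop:vx} and \ref{prop:vt};
\item the constants are uniform in $(x_1,t_1)$, so that they depend only on $k_0$ and $C'_2(P_0)$.
\end{itemize}
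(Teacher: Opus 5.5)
Your final argument is the paper's proof: rescale by $d=r(t_1)-x_1$ in both $x$ and $t$, use $v\sim k_0 d$ and $B(v)/v\to\OM$ to make the equation \eqref{equ-eta} for $\eta$ uniformly parabolic with bounded coefficients, apply an interior gradient estimate using $|\eta|\le C'_2$, and undo the scaling via $\eta_y=d\,\xi$. The paper uses the backward box $\mathcal{R}(\bar x,\bar t)$ instead of your two-sided one, which makes no difference. The earlier detour through $w=v/d$ and $z=w_{yy}$ is unnecessary, and its loss of a power of $d$ comes only from not using $\sup|z|\le C'_2 d$ (with a source that is also $O(d)$, the linear estimate gives $|z_y|=O(d)$). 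The remark about $u$-level regularity can simply be dropped.
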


\begin{proof}
Fix a small $\varepsilon\in (0, k_0)$. We take
$$
\delta' := \min\left\{\frac23 \delta,\ (k_0- \varepsilon)\tau\right\},\quad \tau' := \frac{\tau}{2}-\frac{\delta}{4(k_0- \varepsilon)} \quad \mbox{and}\quad (\bar{x}, \bar{t})\in N_{\delta', \tau'}(P_0)
$$
and define
$$
\mathcal{R}(\bar{x}, \bar{t}):=\left\{(x, t)\left||x-\bar{x}|\leq \frac{\lambda}{2},\ \bar{t}-\frac{\lambda}{4(k_0-\varepsilon)}\leq t\leq \bar{t}\right.\right\},
$$
where $\lambda := r(\bar{t})-\bar{x}$. Then, on the right boundary of this rectangle we have
\begin{equation}\label{r(t)>x+lambda}
x  =  \bar{x}+ \frac{\lambda}{2} = r(\bar{t}) - \frac{\lambda}{2} <  r(\bar{t}) - \frac{\lambda}{4} \leq r(\bar{t}) + (k_0 -\varepsilon) (t-\bar{t}) \leq r(t)
\end{equation}
since $\bar{t}-\frac{\lambda}{4(k_0-\varepsilon)}\leq t\leq \bar{t}$. Here we use the conditions in \eqref{neigh}. This implies that $\mathcal{R}(\bar{x}, \bar{t}) \subset N_{\delta, \frac{\tau}{2}}(P_0)$.

Now we introduce  changes of variables involving translation and scaling:
$$
y := \frac{x-\bar{x}}{\lambda},\quad s :=\frac{t-\bar{t}}{\lambda},
$$
and define
$$
\zeta(y, s) := \eta(\lambda y + \bar{x}, \lambda s + \bar{t})= \eta(x, t),
$$
which satisfies
\begin{equation}\label{zeta-equ}
\begin{aligned}
\zeta_s &= \frac{B(v)}{v}\frac{v}{\lambda} \zeta_{yy} + 2\big[ B'(v) + 1 \big] v_x \zeta_y + \lambda\big[ B''(v) v_x^2 + h_v+ \big(B'(v) + 2\big)v_{xx} \big] \zeta \\
&\quad  + \lambda\big[ h_{xx} + 2 h_{xv} v_x + h_{vv} v_x^2\big], \quad -\frac12\leq y \leq \frac12,\ -\frac{1}{4(k_0- \varepsilon)}\leq s\leq 0.
\end{aligned}
\end{equation}
By Lemma \ref{lem:vxx-bound} we have $|\zeta|\leq C'_2(P_0)$ for $-\frac12\leq y \leq \frac12$ and $-\frac{1}{4(k_0- \varepsilon)}\leq s\leq 0$.
We explain that the equation of $\zeta$ is a uniformly parabolic one. In fact,
by \eqref{neigh} we have
$$
\frac{k_0 + \varepsilon}{4}\leq (k_0-\varepsilon)\frac{r(t)-x}{\lambda}\leq \frac{v(x,t)}{\lambda}\leq (k_0+\varepsilon)\frac{r(t)-x}{\lambda} \leq \frac{3(k_0+\varepsilon)}{2}\quad \mbox{ in }N_{\delta, \frac{\tau}{2}}(P_0).
$$
The first inequality follows from \eqref{r(t)>x+lambda}. This implies
$$
\frac{\OM}{2}\frac{k_0-\varepsilon}{4}\leq\frac{B(v)}{v}\frac{v}{\lambda}\leq \frac{3\OM}{2}\frac{3(k_0+ \varepsilon)}{2}.
$$
So the equation \eqref{zeta-equ} is uniformly parabolic in $[-\frac12,  \frac12]\times [-\frac{1}{4(k_0- \varepsilon)}, 0]$. By using the Schauder interior estimates  we have
$$
|\zeta_y(0, 0)|\leq C'_3,
$$
for some $C'_3>0$ depending on $\varepsilon$, $k_0$ and $C'_2(P_0)$.
In particular, when we take $\varepsilon =\frac{k_0}{2}$, $C'_3$ can be chosen depending on $k_0$ and $C'_2(P_0)$.
Consequently,
$$
|\xi(\bar{x}, \bar{t})|=|\eta_x(\bar{x}, \bar{t})|\leq \frac{C'_3}{\lambda}= \frac{C'_3}{r(\bar{t})-\bar{x}}.
$$
By the arbitrariness of $(\bar{x}, \bar{t})$, we complete the proof of the lemma.
\end{proof}

Based on this lemma, we can prove Proposition \ref{prop:vxxx}.

\medskip
\noindent
{\it Proof of Proposition \ref{prop:vxxx}.}\ \
As the estimate for $v_{xx}$ in Lemma \ref{lem:vxx-bound}, we want to construct a barrier function of the form
$$
\bar{\xi}(x, t) := \frac{\alpha_m}{r(t)- x- \frac{1}{3n}}+ \frac{\beta_m}{r^*(t)-x},\quad  (x, t)\in N'_{\delta', \tau'}(P_0),
$$
where $\frac1n< \delta'$, the parameters $\alpha_m>0$ and $\beta_{m}>\frac{C'_3}{3}$ are two sequences to be determined later, the domain
$$
N'_{\delta', \tau'}(P_0) :=\left\{(x, t)\ \left|\ r(t)-\delta'\leq x< r(t)-\frac1n,\  t_0-\tau'\leq t\leq t_0+ \tau' \right. \right\},
$$
and $r^*(t) := r(t_0- \tau')+ (k_0+ 2\varepsilon)(t- t_0+ \tau')$.
By a direct calculation, we have
\begin{align*}
\mathcal{N}_3 (\bar{\xi}) &= \frac{\alpha_m}{(r(t) - x - \frac{1}{3n})^2} \left\{ -r'(t) - \frac{2B(v)}{r(t) - x - \frac{1}{3n}} - [3B'(v) + 2]v_x - c_3(x, t)\left(r(t) - x- \frac{1}{3n}\right) \right\} \\
&\quad + \frac{\beta_m}{(r^*(t) - x)^2} \left\{ -r'^{*}(t) - \frac{2B(v)}{r^*(t) - x} - [3B'(v) + 2]v_x - c_3(x, t)(r^*(t) - x) \right\}- e_3(x, t).
\end{align*}
Note that
\begin{align*}
&\left(r(t)-\frac{1}{3n}-x\right)(r(t)-x)- \left(r(t)-\frac{1}{n}-x\right)\left(r(t)-\frac{1}{3n}-x\right) \\
> &\left(r(t)-\frac{1}{3n}-x\right)(r(t)-x)- \left(r(t)-\frac{1}{n}-x\right)(r(t)-x)\mbox{\ \ in\ } N'_{\delta', \tau'}(P_0).
\end{align*}
So
$$
\frac{r(t)- x}{r(t)-x-\frac{1}{3n}}< \frac32\mbox{\ \ in\ } N'_{\delta', \tau'}(P_0).
$$
and
$$
\frac{v}{r^*(t)- x}\leq \frac{v}{r(t)- x- \frac{1}{3n}}< \frac{(k_0+ \varepsilon)(r(t)-x)}{r(t)- x- \frac{1}{3n}}<\frac32(k_0+\varepsilon)\mbox{\ \ in\ } N'_{\delta', \tau'}(P_0),
$$
Comparing with the equation of $\eta$, we find that the equation of $\xi$ does not contain the $\xi^2$ term. Therefore, we do not need to impose additional conditions on the coefficients $\alpha_m$ and $\beta_m$ to ensure $\mathcal{N}_3(\bar{\xi})\geq 0$ while it suffices to take the parameters $\delta'$ and $\tau'$ sufficiently small.

Next we compare $\bar{\xi}$ and $\xi$ on the parabolic boundary of $N'_{\delta', \tau'}(P_0)$. We first set $\alpha_1 := \frac{2C'_3}{3}$ and $\beta_1 := \frac{2C'_3}{3}$, then at $t = t'_1 := t_0- \tau'$, we have
$$
\bar{\xi}(x, t'_1)= \frac{2C'_3}{3(r(t'_1)-x- \frac{1}{3n})}+ \frac{2C'_3}{3(r(t'_1)-x)}> \frac{4C'_3}{3(r(t'_1)-x)}> \xi(x, t'_1).
$$
By \eqref{neigh} and the definition of $r^*(t)$, we have $0< r^*- x< \delta' + 6\varepsilon \tau'< 2\delta'$ (by taking $\tau'< \frac{\delta'}{6\varepsilon}$). So,  at $x= r(t)- \delta'$, we have
$$
\bar{\xi}(r(t)- \delta', t)\geq \frac{2C'_3}{3(\delta'- \frac{1}{3n})}+ \frac{C'_3}{3 \delta'}\geq \frac{C'_3}{\delta'}\geq \xi(x- \delta', t).
$$
At $x= r(t)- \frac1n$, we also have
$$
\bar{\xi}\left(r(t)- \frac1n, t \right)= \frac{2C'_3}{3(\frac1n- \frac{1}{3n})}+ \frac{2C'_3}{3(r^*(t)- r(t)+ \frac1n)}\geq n C'_3 \geq \xi \left(x- \frac1n, t \right).
$$
Consequently, by comparison principle we can get
$$
\xi(x, t)\leq \frac{2C'_3}{3(r(t)- x- \frac{1}{3n})}+ \frac{2C'_3}{3(r^*(t)-x)} \quad \mbox{in} \ N'_{\delta', \tau'}(P_0).
$$

Next, we take $\alpha_2 := \left(\frac{2}{3}\right)^2C'_3$ and $\beta_2 := \frac{2C'_3}{3}+ \left(\frac{2}{3}\right)^2C'_3$. Based on the preceding discussion, $\mathcal{N}_3 (\bar{\xi})\geq 0$ still holds. In addition, at $t= t'_1$, we have
$\bar{\xi}(x, t'_1) > \xi(x, t'_1)$; At $x= r(t)- \delta'$ and at $x=r(t)-\frac1n$,
using the estimate for $\alpha_1$ and $\beta_1$ we have $\bar{\xi}(x, t)\geq \xi(x, t)$ for $t\in [t_0-\tau', t_0+\tau']$. Then by comparison we have
$$
\xi(x, t)\leq \frac{\left(\frac{2}{3}\right)^2C'_3}{r(t)- x- \frac{1}{3n}}+ \frac{\frac{2C'_3}{3}+ \left(\frac{2}{3}\right)^2C'_3}{r^*(t)-x} \quad \mbox{in} \ N'_{\delta', \tau'}(P_0).
$$
By iteration, when we take $\alpha_m := \left(\frac{2}{3}\right)^mC'_3$ and $\beta_m := \displaystyle\sum_{i=1}^m \left(\frac23\right)^i C'_3= 2\left[1- \left(\frac23\right)^m\right] C'_3$, we have
$$
\xi(x, t)\leq \frac{\left(\frac{2}{3}\right)^mC'_3}{r(t)- x- \frac{1}{3n}}+ \frac{2\left[1- \left(\frac23\right)^m\right] C'_3}{r^*(t)-x} \quad \mbox{in} \ N'_{\delta', \tau'}(P_0).
$$

Finally, by letting $n\to \infty$ and $m\to \infty$, we derive that, for each $(x, t)\in N_{\delta', \frac{\tau'}{2}}(P_0)$,
$$
v_{xxx}(x, t)< \frac{2C'_3}{r^*(t)- x}\leq \frac{2C'_3}{[r(t'_1)+(k_0+ 2\varepsilon)\frac{\tau'}{2}]- [r(t'_1)+(k_0+ \varepsilon)\frac{\tau'}{2}]}= \frac{4C'_3}{\varepsilon \tau'}.
$$
This actually gives the local boundary estimates for $v_{xxx}$.

Combining the above boundary estimate and the interior one, one can derive the desired bound for $v_{xxx}$.
\qed

\subsection{The Bound of $\partial_x^k v $}

\begin{thm}\label{thm:k-th order}
Suppose all conditions of Proposition \ref{prop:vxx-whole} hold, with the condition \eqref{ass-H} holds for $k\geq 2$.
Then, for any $T_2 >T_1 >\max\{t_*(b),t_*(-b)\}$, there exists $C_k$ depending on $T_1, T_2$ and the constants in \eqref{ass-A}, \eqref{F} and \eqref{ass-H} such that
$$
|\partial^k_x v (x,t)|\leq C_k \mbox{\ in \ } D(T_1,T_2):= \{(x,t) \mid l(t)<x<r(t),\ t\in [T_1, T_2]\}.
$$
\end{thm}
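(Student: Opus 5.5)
We argue by induction on $k$; the cases $k=2$ and $k=3$ are Proposition \ref{prop:vxx-whole} and Proposition \ref{prop:vxxx}. Assume \eqref{ass-H} holds for some $k\geq 4$. By Lemma \ref{lem:smooth-Ah-2}, $B\in C^k(\D_v)$ and $h\in C^k(\R\times\D_v)$. Assume also that $|\partial_x^j v|\le C_j$ in $D(T_1',T_2')$ for $2\le j\le k-1$, on a slightly larger time window $T_1'<T_1$, $T_2'>T_2$. Set $\xi_k:=\partial_x^k v$. Differentiating the equation in \eqref{p-v} $k-2$ times more than \eqref{equ-eta} gives a linear equation
$$
\mathcal{N}_k(\xi_k):=\partial_t\xi_k-B(v)\,\partial_x^2\xi_k-\bigl[kB'(v)+2\bigr]v_x\,\partial_x\xi_k-c_k\,\xi_k-e_k=0 .
$$
Here $c_k$ is a combination of $B''(v)v_x^2$, $(B'(v)+2)v_{xx}$-type terms and $h_v$. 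The source $e_k$ is a polynomial in $\partial_x^j v$ for $j\le k-1$, with coefficients involving $B^{(i)}(v)$ and derivatives of $h$ of order at most $k$. Since the equation is linear in $\xi_k$ for $k\ge3$, the induction hypothesis makes $c_k$ and $e_k$ bounded. As in the remark after \eqref{vxxx-equ}, the comparison argument of Proposition \ref{prop:AB}, with a time cutoff $\rho(t)$ as in Lemma \ref{lem:vxx-bound}, then gives a two-sided bound $|\xi_k|\le D_k$ on compact sets away from the free boundary. Interior Schauder estimates give the same bound there as well. So only a neighbourhood of $P_0=(r(t_0),t_0)$ (and symmetrically of $l(t_0)$) needs attention.

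The near-boundary step copies Lemma \ref{lem:vxxx-est}. Fix $(\bar x,\bar t)$ near $P_0$, put $\lambda=r(\bar t)-\bar x$, and rescale $y=(x-\bar x)/\lambda$, $s=(t-\bar t)/\lambda$. Then $\zeta_{k-1}(y,s):=\partial_x^{k-1}v(\lambda y+\bar x,\lambda s+\bar t)$ solves a uniformly parabolic equation on a fixed box, because $v/\lambda$ is comparable to $k_0$ by \eqref{neigh}. Its coefficients are bounded and Hölder in $(y,s)$ by the induction hypothesis, and $|\zeta_{k-1}|\le C_{k-1}$. The interior Schauder estimate then gives $|\partial_y\zeta_{k-1}(0,0)|\le C'_k$, that is,
$$
|\xi_k(x,t)|\le \frac{C'_k}{r(t)-x}\quad\text{in } N_{\delta',\tau'}(P_0).
$$

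With this a priori blow-up rate in hand, we repeat the barrier iteration from the proof of Proposition \ref{prop:vxxx} verbatim, using
$$
\bar\xi=\frac{\alpha_m}{r(t)-x-\frac1{3n}}+\frac{\beta_m}{r^*(t)-x},\qquad \alpha_m=\Bigl(\tfrac23\Bigr)^mC'_k,\qquad \beta_m=2\Bigl[1-\Bigl(\tfrac23\Bigr)^m\Bigr]C'_k .
$$
The sign $\mathcal N_k(\bar\xi)\ge0$ comes from the leading terms, which have the form
$$
\frac{\alpha}{(r-x)^2}\Bigl\{-r'-\frac{2B(v)}{r-x}+\bigl(kB'(v)+2\bigr)(k_0-\varepsilon)\Bigr\}.
$$
Since $B(v)/(r-x)\approx\omega k_0$ and $kB'(v)\approx k\omega$, this quantity is approximately $\{(k-2)\omega+1\}k_0>0$ and stays positive once $\varepsilon$ is small. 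The lower-order terms $c_k\bar\xi$ and $e_k$ are absorbed by taking $\delta',\tau'$ small, since there is no quadratic term. Letting $n\to\infty$ and then $m\to\infty$ gives $\xi_k\le 4C'_k/(\varepsilon\tau')$ near $P_0$. Applying the same argument to $-\xi_k$, whose equation has the same structure with $e_k$ replaced by $-e_k$, gives the lower bound. A finite covering of the compact boundary curves over $[T_1,T_2]$, combined with the interior bound, completes the induction.

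The main obstacle is the rescaled Schauder step. Bounding $\partial_y\zeta_{k-1}$ needs Hölder control of the rescaled coefficients, for instance $\frac{B(v)}{v}\frac{v}{\lambda}$ and $\lambda e_{k-1}$. These depend on $\partial_x^{k-1}v$ itself and on $v_x$, $v/\lambda$ in the scaled variables. I would handle this inside the induction by first obtaining $C^{\alpha}$ bounds on all lower derivatives in the scaled boxes, applying Schauder successively to $\zeta_2,\dots,\zeta_{k-1}$. The check to make is that differentiating through the factor $v/\lambda$ costs no negative powers of $\lambda$: each $y$-derivative of $v/\lambda$ equals $\lambda^{j-1}\partial_x^jv$, which is bounded for $j\le k-1$.
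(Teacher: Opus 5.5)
Your proposal is correct and follows the paper's route: induction on $k$, then the rescaled interior Schauder estimate of Lemma~\ref{lem:vxxx-est} applied to $\partial_x^{k-1}v$ to get $|\partial_x^k v|\le C'_k/(r(t)-x)$ near $P_0$, then the $\alpha_m,\beta_m$ barrier iteration of Proposition~\ref{prop:vxxx}, and finally interior estimates with a covering argument. The only differences are minor: you get the lower bound by running the same local barrier argument on $-\partial_x^k v$, which avoids the global comparison behind the paper's subsolution $-D_k(t+t_k)$, and you spell out the H\"older control of the rescaled coefficients that the paper leaves implicit.
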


\begin{proof}
The conclusion has been proved above for the case when $k=2$ and $k=3$. For general $k\geq 4$, we denote $v^{(k)} := \partial_x^k v$. Then $v^{(k)}$ satisfies the following equation:
$$
\begin{aligned}
v^{(k)}_t &= B(v)\, v^{(k+2)} + \bigl(k B'(v) + 2\bigr) v^{(1)} v^{(k+1)} \\
&\quad + \Bigg[ \frac{k(k-1)}{2} B''(v) \bigl(v^{(1)}\bigr)^2 + \left( \frac{k(k-1)}{2} B'(v) + 2k \right) v^{(2)} + h_v(x,v) \Bigg] v^{(k)} \\
&\quad + P_k\bigl(x, v, v^{(1)}, v^{(2)}, \dots, v^{(k-1)}\bigr),
\end{aligned}
$$
where $P_k$ denotes a polynomial of $v^{(1)}, \dots, v^{(k-1)}$ whose coefficients are completely determined by $B(v)$, $h(x,v)$ and their derivatives. By repeating the argument in Lemma \ref{lem:vxxx-est}, we can find a positive constant \(C'_k\) and parameters $\delta_k$ and $\tau_k$ such that the estimate $|v^{(k)}|\leq \frac{C'_k}{r(t)-x}$ holds in a neighborhood $N_{\delta_k, \tau_k}(P_0)$ of $P_0$. By using lower and upper solutions of the following form
$$
\underline{v}^{(k)}= -D_k(t+ t_k) \quad \mbox{ for some positive constant}\quad D_k, t_k,
$$
and
$$
\bar{v}^{(k)}= \frac{\left(\frac{2}{3}\right)^n C'_k}{r(t)- x- \frac{1}{3n}}+ \frac{2\left[1- \left(\frac23\right)^n\right] C'_k}{r^*(t)-x} \quad \mbox{in} \quad N_{\delta_k, \tau_k}(P_0),
$$
we can eventually obtain the local boundary estimates for $v^{(k)}$. The estimates in $D(T_1, T_2)$ follow from the proved boundary estimate and the interior one.
\end{proof}

\subsection{High Regularity on the Boundaries}
Note that the above regularity results hold in $D(T_1, T_2)$, which does not include the side boundaries
$$
S_l := \{(x,t) \mid x= l(t),\ t\in [T_1, T_2]\},
\qquad
S_r := \{(x,t) \mid x= r(t),\ t\in [T_1, T_2]\}.
$$
Hence, the existence of the $k$-th order left (resp. right) derivative of $v$:
$$
\partial_x^k v(r(t)-0,t):=  \lim\limits_{\epsilon\to 0^+}\frac{\partial_x^{k-1} v(r(t)-\epsilon,t) - \partial_x^{k-1} v(r(t)-0,t)}{-\epsilon }
$$
(resp. $\partial_x^k v(l(t)+0,t)$) on $S_r$ (resp. $S_l$),
as well as its boundedness, is yet open.
We conjecture that these properties cannot be deduced solely from the boundedness of $\partial^k_x v$ in $D(T_1, T_2)$.
Nevertheless, they are true when we require the interior regularity a little higher.

\begin{thm}\label{thm:r-smooth}
Assume \eqref{ass-A} and \eqref{F}. Assume further that \eqref{ass-H} holds for $k\geq 2$. Let \(v \) be the solution of \eqref{p-v} with initial data $v_0 \in \X_0$. Then, for any $t_0 > t_*(b)$,
\begin{enumerate}
\item[\rm (i).] $\partial_x^{k-1} v(r(t_0)-0,t_0)$ exists, and it is bounded by $C_{k-1}$ in the previous theorem;
\item[\rm (ii).] $r^{(k-1)}(t_0)$ exists and it is continuous in $t_0$.
\end{enumerate}
\end{thm}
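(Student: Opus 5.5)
For (i), the plan is to derive existence of the one-sided limit of $\partial_x^{k-1}v$ at $x=r(t_0)$ from the bound on the next derivative. Theorem \ref{thm:k-th order} gives $|\partial_x^k v|\le C_k$ in $D(T_1,T_2)$, where $T_1<t_0<T_2$ are chosen with $T_1>t_*(b)$. Hence, for fixed $t_0$, the function $x\mapsto \partial_x^{k-1}v(x,t_0)$ is Lipschitz on $(l(t_0),r(t_0))$ with constant $C_k$. A Lipschitz function on a bounded open interval is uniformly continuous, so it extends continuously to the endpoint. Thus $\lim_{x\to r(t_0)-0}\partial_x^{k-1}v(x,t_0)$ exists, and it is bounded by $C_{k-1}$ because the interior values are.

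To match the definition of the left derivative given before the theorem, I will argue by induction on $j\le k-1$. For each such $j$, the boundary value $\partial_x^{j}v(r(t_0)-0,t_0)$, defined as a difference quotient of $\partial_x^{j-1}v$, coincides with the limit of $\partial_x^{j}v$. This follows from the mean value theorem applied on $[r(t_0)-\epsilon,r(t_0))$, together with the continuity of the extension. The case $j=1$ is item (a) in the proof of Theorem \ref{thm:Darcy0}. I will also record joint continuity in $(x,t)$ up to $S_r$: the Lipschitz bound in $x$ is uniform in $t$, and interior continuity in $t$ then gives continuity of $w_j(t):=\partial_x^j v(r(t)-0,t)$ in $t\in[T_1,T_2]$ for $j\le k-1$.

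For (ii), I will express $r^{(j)}$ through boundary derivatives of $v$ and proceed by induction. Darcy's law (Theorem \ref{thm:Darcy0}) gives $r'(t)=-w_1(t)$, and $r'$ is continuous. To go higher, I will differentiate the identities $\partial_x^{j}v$ along the curve $x=r(t)$, using
\[
\frac{d}{dt}w_j(t)=\partial_x^{j+1}v(r(t)-0,t)\,r'(t)+\partial_t\partial_x^j v(r(t)-0,t),
\]
and replace $\partial_t\partial_x^jv$ by the differentiated equation from the proof of Theorem \ref{thm:k-th order}. That equation contains $B(v)\partial_x^{j+2}v$, and this term vanishes at the boundary because $B(0)=0$ and $\partial_x^{j+2}v$ is bounded (as in \eqref{Bvxx}). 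It also contains $(jB'(v)+2)v_x\partial_x^{j+1}v$ plus lower-order terms. One checks that $\dot w_j$ is then a polynomial in $w_1,\dots,w_{j+1}$ with coefficients $h$, $B$ and their derivatives evaluated at $v=0$. Since $r'=-w_1$, inductively $r^{(j+1)}$ is a continuous function of $w_1,\dots,w_{j+1}$. This requires $j+1\le k-1$ (so that the equation for $\partial_x^jv$ needs only $\partial_x^{j+2}v$, bounded with $j+2\le k$), which yields $r^{(k-1)}$ continuous.

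The main obstacle is justifying the time-differentiation of $w_j$ up to the boundary, because $\partial_t\partial_x^jv$ is not directly controlled at $S_r$. I would avoid pointwise differentiation and instead work with an integrated form. Fix $\epsilon>0$, write $\partial_x^jv(r(t)-\epsilon,t)$ and use the interior $C^\infty$ regularity to compute its derivative via the chain rule and the equation, for $t$ in a compact subinterval. The right side converges uniformly as $\epsilon\to0$, by the uniform bounds and the continuity established in (i). The left side converges uniformly to $w_j$. Passing to the limit in the integral identity $w_j(t)-w_j(s)=\int_s^t(\cdots)$ shows that $w_j\in C^1$ with the stated derivative. The point needing care is the uniform convergence of $B(v)\partial_x^{j+2}v\to0$, which follows from $B(v)\le B_T\,v\to0$ together with the bound $C_{j+2}$.
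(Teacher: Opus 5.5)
Your proposal is correct and follows the paper's route. In both arguments, the bound on $\partial_x^k v$ gives uniform Lipschitz control, hence boundary limits that the mean value (derivative-limit) argument identifies with the one-sided derivatives. Darcy's law is then differentiated using the differentiated equation, where $B(0)=0$ kills the top-order term. The differences are minor: you run the induction for general $k$, whereas the paper writes out only $k=3$. You also justify the time-differentiation along $x=r(t)$ by an integrated identity on $x=r(t)-\epsilon$, whereas the paper shows $v_{xt}$ extends continuously to $\overline{D^+}$ and differentiates Darcy's law directly.
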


\begin{proof}
Without loss of generality, we consider only the case where $k=3$. Other cases can be studied similarly.

For any small $\delta>0$, we denote as above
$$
D(t_0-\delta, t_0+\delta) := \{(x,t) \mid l(t)<x<r(t),\ |t-t_0|\leq \delta\}.
$$
Then, under our assumption, there exist positive constants $C^*_j\ (j=0,1,2,3)$ such that
$$
|\partial^j_x v(x,t)| \leq C^*_j, \quad (x,t)\in D(t_0-\delta, t_0+\delta), \ j=0,1,2,3.
$$
Using these bounds in the interval $(l(t),r(t))$, it is easily seen that the following limits exist:
$$
H_1(t) := \lim\limits_{x\to r(t)-0} v_x(x,t),\qquad H_2 (t):= \lim\limits_{x\to r(t)-0} v_{xx} (x,t).
$$
By the Derivative Limit Theorem we have
$$
H_1(t) = v_x(r(t)-0,t) = - r'(t),\qquad H_2(t) = v_{xx} (r(t)-0,t).
$$
This proves our conclusion: when \eqref{ass-H} holds for $k=3$, the second-order left derivative of $v$
on the right boundary is well defined, and, for each given $t$, $v_{xx}(\cdot,t)$ is continuous till the boundary.
Furthermore, since $v_{xx}(\cdot,t)$ is Lipschitz in $x\in (l(t),r(t))$ with Lipschitz constant $C^*_3$ (which is uniform
in $t\in [t_0-\delta, t_0 + \delta]$), one can show that, as a function of two variables, $v_{xx}(x,t)$ is continuous
in $\overline{D^+}$, where $D^+ := D(t_0-\delta, t_0+\delta) \cap \{x>0\}$.

Next we study $r''(t)$. Consider the equation of $v_x$:
$$
v_{tx} = B(v) v_{xxx} + [B'(v)+2]v_x v_{xx} + h_x + h_v v_x,\qquad (x,t)\in D(t_0-\delta, t_0+\delta).
$$
For any given $(x_1, t_1)$ with $x_1 :=r(t_1)$, noting $B(0)=0$ and the boundedness of $v_{xxx}$,
by taking limit as $(x,t)\to (x_1,t_1)$ in this equation we have
$$
\lim\limits_{(x,t)\to (x_1, t_1)} v_{tx} = H_3 (t_1) := [B'(0)+2]H_1 (t_1) H_2 (t_1) + h_x(x_1,0) + h_v(x_1,0) H_1 (t_1).
$$
Hence, $v_{tx}$ is continuous in $\overline{D^+}$. Differentiating Darcy's law
$r'(t) = -v_x(r(t)-0,t)$ and using the continuity of $v_x, v_{xx}, v_{xt}$ in $\overline{D^+}$ we conclude that
$$
r''(t) = v_x(r(t)-0,t) v_{xx} (r(t)-0,t) - v_{xt}(r(t)-0,t) = H_1(t) H_2(t) - H_3(t),
$$
and it is continuous in $t\in [t_0-\delta, t_0+\delta]$.
\end{proof}

\end{document}